\numberwithin{equation}{section}
\newcommand{\force}{{\hspace{0.02 cm}\Vdash}}
\newtheorem{prop}{Proposition}[section]
\newtheorem{fact}[prop]{Fact}
\newtheorem{cor}[prop]{Corollary}
\newtheorem{thm}[prop]{Theorem}
\newtheorem{clm}[prop]{Claim}
\newtheorem{obs}[prop]{Observation}
\newtheorem{subclaim}{Subclaim}[prop]
\newtheorem{prob}[prop]{Problem}
\theoremstyle{definition}
\newtheorem{dfn}[prop]{Definition}
\newcommand{\mc}[1]{\mathcal{#1}}
\newcommand{\mb}[1]{\mathbb{#1}}
\newcommand{\oo}{\omega}
\newcommand{\uhr}{\upharpoonright}
\newcommand{\omg}{{\omega_1}}
\def\<{\left\langle}
\def\>{\right\rangle}
\def\br#1;#2;{\bigl[ {#1} \bigr]^ {#2} }
\newcommand{\mf}[1]{\mathfrak{#1}}
\newcommand{\setm}{\setminus}
\newcommand{\subs}{\subset}
\newcommand{\dom}{\operatorname{dom}}
\newcommand{\vareps}{\varepsilon}
\newcommand{\bb}{\mathbb}
\title{Extremal triangle-free and odd-cycle-free colourings of uncountable graphs}
\date{\today}
\author{Chris Lambie-Hanson}
  \address{Department of Mathematics and Applied Mathematics, Virginia Commonwealth University, Harris Hall, Office 4173}
 \email[Corresponding author]{cblambiehanso@vcu.edu}
 \urladdr{https://people.vcu.edu/$\sim$cblambiehanso/}
  \author{D\'aniel T. Soukup}
 \email{daniel.t.soukup@gmail.com}
 \urladdr{https://danieltsoukup.github.io}
 \subjclass[2010]{Primary 03E02. Secondary 05C63, 03E05.}
 \keywords{Ramsey theory, regressive colourings, triangle-free colourings,
  uncountable graphs}
\newtheorem*{rep@theorem}{\rep@title}
\newcommand{\newreptheorem}[2]{%
\newenvironment{rep#1}[1]{%
 \def\rep@title{#2 \ref{##1}}%
 \begin{rep@theorem}}%
 {\end{rep@theorem}}}
\begin{document}
\begin{abstract}
  The optimality of the Erd\H{o}s-Rado theorem for pairs is witnessed by
  the colouring $\Delta_\kappa : [2^\kappa]^2 \rightarrow \kappa$ recording the
  least point of disagreement between two functions. This colouring has
  no monochromatic triangles or, more generally, odd cycles. We investigate
  a number of questions investigating the extent to which $\Delta_\kappa$ is an
  \emph{extremal} such triangle-free or odd-cycle-free colouring. We begin
  by introducing the notion of $\Delta$-regressive and almost $\Delta$-regressive
  colourings and studying the structures that must appear as monochromatic
  subgraphs for such colourings. We also consider the question as to whether
  $\Delta_\kappa$ has the minimal cardinality of any \emph{maximal}
  triangle-free or odd-cycle-free colouring into $\kappa$. We resolve the
  question positively for odd-cycle-free colourings.
\end{abstract}
\maketitle

\section{Introduction}

The starting point of our paper is the classical Erd{\H o}s-Rado partition
relation:
\[
  (2^\kappa)^+\to (\kappa^+)^2_\kappa,
\]
i.e., for any colouring $c:[(2^\kappa)^+]^2\to \kappa$, there is a
$c$-monochromatic subset of $(2^\kappa)^+$ of size $\kappa^+$
\cite{erdos_rado_partition_calculus}. This result
is optimal, in the sense that
\[
  2^\kappa\not\to (3)^2_\kappa.
\]
This negative relation is witnessed by the following natural colouring. For two
functions $x,y$ whose domains are sets of ordinals, let
\[
  \Delta(x,y)=\min\{i \in \dom x\cap \dom y:x(i)\neq y(i)\}
\]
if such an $i$ exists (otherwise $\Delta(x,y)$ is undefined).
We let $\Delta_\kappa$ denote the restriction of $\Delta$ to $[2^\kappa]^2$,
where $2^\kappa$ denotes the set of all functions from $\kappa$ to $2$.
The following is immediate.

\begin{fact}
For any infinite $\kappa$, the colouring $\Delta_\kappa:[2^\kappa]^2\to \kappa$ has no monochromatic triples or odd cycles.
\end{fact}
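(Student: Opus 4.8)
The plan is to show that, for each colour $i < \kappa$, the monochromatic graph $G_i = \{\{x,y\}\in [2^\kappa]^2 : \Delta(x,y)=i\}$ is bipartite, which rules out odd cycles and triangles simultaneously. Since the paper calls this ``immediate,'' the argument should be short.

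First I would fix $i$ and partition the vertex set as $2^\kappa = A_0 \cup A_1$, where $A_\varepsilon = \{x \in 2^\kappa : x(i)=\varepsilon\}$ for $\varepsilon \in 2$. The key step is to unwind the definition of $\Delta$: if $\Delta(x,y)=i$, then $i = \min\{j \in \dom x \cap \dom y : x(j)\neq y(j)\}$, and since on $2^\kappa$ both domains equal $\kappa$ this simply says $i=\min\{j<\kappa : x(j)\neq y(j)\}$; in particular $x(i)\neq y(i)$. Hence every edge of $G_i$ has one endpoint in $A_0$ and the other in $A_1$, so $(A_0,A_1)$ is a bipartition of $G_i$.

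A bipartite graph contains no cycle of odd length, and in particular no triangle, so $G_i$ — and therefore the colouring $\Delta_\kappa$ as a whole — admits no monochromatic odd cycle and no monochromatic triple. For the triangle case one can alternatively invoke pigeonhole directly: vertices $x,y,z$ with $\Delta(x,y)=\Delta(x,z)=\Delta(y,z)=i$ would force $x(i),y(i),z(i)$ to be pairwise distinct elements of the two-element set $2$, which is absurd.

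There is no genuine obstacle here; the only point requiring a little care is the bookkeeping in the definition of $\Delta$ and the observation that on $2^\kappa$ the clause about $\dom x \cap \dom y$ is vacuous. I would present the bipartiteness argument as the main line and record the pigeonhole version of the triangle case as a remark.
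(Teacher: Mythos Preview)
Your argument is correct and is precisely the ``immediate'' observation the paper has in mind; indeed, the same bipartition idea (partitioning $2^\kappa$ according to the value at a fixed coordinate) is used explicitly later in the paper, e.g.\ in the proofs of Theorem~\ref{almost_regressive_odd_cycle_thm} and Proposition~\ref{delta_colouring_prop}. There is nothing to add.
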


We are interested in the extent to which $\Delta$ is a \emph{minimal}
triangle-free or odd-cycle-free colouring on $[2^\kappa]^2$. One way
in which this question can be made precise is via the following
definition.

\begin{dfn}
We say that $c:[2^\kappa]^2\to \kappa$  is \emph{$\Delta$-regressive} if  $$c(x,y)<\Delta(x,y)$$ for all $x, y$ such that $\Delta(x,y)>0$.
\end{dfn}

So our primary question about such colourings is: What monochromatic subgraphs must
appear in a $\Delta$-regressive colouring on $[2^\kappa]^2$? We will see in
Section \ref{delta-regressive_section} that, if $\kappa > \omega$, then such
a colouring must have monochromatic cycles of every length. This will lead us
to introduce the notion of \emph{almost $\Delta$-regressive colourings} and study
analogous questions with respect to this larger class of colourings.

In particular, we will prove that
\begin{itemize}
  \item the existence of an almost $\Delta$-regressive colouring on
  $[2^\kappa]^2$ with no monochromatic odd cycles is equivalent to the
  existence of $\mu < \kappa$ such that $2^\mu = 2^\kappa$;
  \item it is consistent relative to the consistency of a measurable cardinal
  that, for instance, every almost $\Delta$-regressive colouring on
  $[2^{\omega_1}]^2$ has a monochromatic set of size $\aleph_1$;
  \item every almost $\Delta$-regressive $\kappa$-Borel colouring on
  $[2^\kappa]^2$ has a monochromatic set of size $\kappa$.
\end{itemize}

\medskip

In Section \ref{maximal_colouring_section}, we look at maximality properties
of the colouring $\Delta_\kappa$ on $[2^\kappa]^2$.

\begin{dfn}
  Suppose that $\mc H$ is a collection of graphs and $c:[X]^2 \rightarrow
  \kappa$ is a colouring.
  \begin{enumerate}
    \item We say that $c$ is \emph{$\mathcal{H}$-free} if, for every $G \in
    \mathcal{H}$, there is no subgraph $E \subseteq [X]^2$ isomorphic to $G$
    such that $c \restriction E$ is constant.
    \item We say that $c$ is \emph{maximal $\mc H$-free (into $\kappa$)} if
    it is $\mc H$-free but, for any $y \notin X$ and any $d:[X \cup \{y\}]^2
    \rightarrow \kappa$ extending $c$, $d$ is not $\mc H$-free.
  \end{enumerate}
\end{dfn}

We are mostly concerned with $\mc H$ being the single 3-cycle or the collection of
all odd cycles. We will see that $\Delta_\kappa$ is a maximal 3-cycle-free
colouring (and hence a maximal odd-cycle-free colouring) into $\kappa$. Our
primary question here is whether there exist \emph{smaller} such maximal
colourings, i.e., whether there are maximal 3-cycle-free or odd-cycle-free
colourings $c:[X]^2 \rightarrow \kappa$ in which $|X| < 2^\kappa$.
We answer this question negatively for odd-cycle-free colourings, in the process
providing a characterization of odd-cycle-free colourings. The question
for 3-cycle-free colourings remains open.

In Section \ref{further_remarks_section}, we prove a couple of results about
3-cycle-free colourings $c:[\omega_1]^2 \rightarrow \omega$. In particular,
we show that, if $\diamondsuit$ holds, then there is a colouring
$c:[\omega_1]^2 \rightarrow \omega$ so that each colour class
$G_n = c^{-1}(\{n\})$ is a Hajnal-M\'{a}t\'{e} graph with uncountable
chromatic number, extending previous results of Komj\'{a}th (\cite{komjath_note_on_hajnal_mate}, \cite{komjath_second_note}).

We conclude in Section \ref{problem_section} by collecting a number of
problems that remain open.

Our notation and terminology is for the most part standard. If
$X$ is a set and $\theta$ is a cardinal, then
$[X]^\theta := \{Y \subseteq X \mid |Y| = \theta\}$. If $c$ is
a function on $[X]^2$, then we will frequently abuse notation and
write $c(x,y)$ in place of $c(\{x,y\})$. If $c:[X]^2
\rightarrow \kappa$ is a colouring and $Y \subseteq X$, then $Y$ is
\emph{$c$-monochromatic} if $c \restriction [Y]^2$ is constant.
A $c$-monochromatic triple (or triangle) is a $c$-monochromatic subset with
exactly three elements. A $c$-monochromatic \emph{cycle} is a finite
injective sequence $\langle x_i \mid i < k \rangle$ of elements of
$X$ (with $k \geq 3$) such that $c$ is constant on the set $\{\{x_i, x_{i+1}\} \mid i + 1 < k\}
\cup \left\{\{x_{k-1}, x_0\}\right\}$. The number $k$ is the
\emph{length} of the cycle. Such a cycle is a $c$-monochromatic
\emph{odd} cycle if $k$ is odd.

\section{Monochromatic subsets in $\Delta$-regressive colourings}
\label{delta-regressive_section}

We begin with the simple observation that there are $\Delta$-regressive
colourings on $[2^\omega]^2$ without monochromatic triples. The following
is easily verified.

\begin{obs}
Define a colouring $c:[2^\omega]^2 \rightarrow \omega$ by letting, for all
$\{f,g\} \in [2^\omega]^2$,
\begin{align*}
c(f,g) =
\begin{cases}
    \Delta(f,g) - 1 & \text{if } \Delta(f,g) > 0 \\
    f(1) + g(1) ~ (\mathrm{mod} ~ 2) & \text{if } \Delta(f,g) = 0
\end{cases}
\end{align*}
Then $c$ is $\Delta$-regressive and has no monochromatic triples.
\end{obs}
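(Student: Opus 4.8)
Since the two clauses defining $c$ cover exactly the cases $\Delta(f,g) > 0$ and $\Delta(f,g) = 0$, the colouring is total, and $\Delta$-regressivity is immediate: when $\Delta(f,g) > 0$ the first clause gives $c(f,g) = \Delta(f,g) - 1 < \Delta(f,g)$, while pairs with $\Delta(f,g) = 0$ carry no $\Delta$-regressive constraint. So the whole content is that $c$ admits no monochromatic triple. The plan is to fix three distinct $f, g, h \in 2^\omega$, suppose for contradiction that $c(f,g) = c(g,h) = c(f,h) = n$, and split on how the two clauses are distributed among the three pairs. The structural fact I would use is that, since $f(0), g(0), h(0)$ all lie in $\{0,1\}$, among the three pairs either none or exactly two satisfy $\Delta = 0$ (a $2$-colouring of three points has an even number of bichromatic edges). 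This gives two cases.

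In the first case no pair has $\Delta = 0$, so $f(0) = g(0) = h(0)$ and all three pairs are coloured by the first clause. Then $c(f,g) = c(g,h) = c(f,h) = n$ forces $\Delta(f,g) = \Delta(g,h) = \Delta(f,h) = n + 1 > 0$, and hence at coordinate $n+1$ the values $f(n+1), g(n+1), h(n+1)$ would have to be pairwise distinct. This is impossible in $\{0,1\}$, and is just the classical argument that $\Delta$ itself has no monochromatic triple; it carries over verbatim.

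The second case is the crux and, I expect, the main obstacle. Here exactly two pairs have $\Delta = 0$, and they share the unique vertex whose $0$th coordinate is the minority value; after relabelling, say $f(0) = g(0) \neq h(0)$, so $\Delta(f,h) = \Delta(g,h) = 0$ while $\Delta(f,g) > 0$. The pairs $\{f,h\}$ and $\{g,h\}$ are coloured by the parity clause as $f(1) + h(1)$ and $g(1) + h(1)$ modulo $2$; as $h$ is common to both, $c(f,h) = c(g,h)$ forces $f(1) = g(1)$, and together with $f(0) = g(0)$ this gives $\Delta(f,g) \geq 2$, so $c(f,g) = \Delta(f,g) - 1 \geq 1$. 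It remains to reconcile this with $n = f(1) + h(1) \pmod 2 \in \{0, 1\}$: the only surviving configuration is $\Delta(f,g) = 2$ with $n = 1$ and $f(1) \neq h(1)$, and the delicate heart of the argument is to rule this last configuration out. Closing this comparison between the parity of coordinate $1$ and the value $\Delta(f,g) - 1$ is exactly where I expect the real work to lie, and I would analyse the coordinate-$1$ values of $f$, $g$, $h$ and the possibilities for $\Delta(f,g)$ with care to finish.
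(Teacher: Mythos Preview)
Your case analysis is careful and essentially complete; the trouble is not with your argument but with the statement. The paper offers no proof beyond ``easily verified'', so there is nothing to compare against, but your dissection of Case~2 in fact uncovers a counterexample rather than a case that can be eliminated.

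You correctly reduce everything to the configuration $f(0)=g(0)\neq h(0)$, $f(1)=g(1)\neq h(1)$, $\Delta(f,g)=2$, $n=1$. This configuration is realised: take $f=(0,0,0,0,\ldots)$, $g=(0,0,1,0,\ldots)$, $h=(1,1,0,0,\ldots)$. Then $\Delta(f,g)=2$ gives $c(f,g)=1$, while $\Delta(f,h)=\Delta(g,h)=0$ and $f(1)+h(1)\equiv g(1)+h(1)\equiv 1\pmod 2$, so $c(f,h)=c(g,h)=1$ as well. Thus $\{f,g,h\}$ is a monochromatic triple of colour~$1$, and the Observation as stated is false. (Your Case~1, and the colour-$0$ half of Case~2, are fine: the colour-$0$ class consists exactly of pairs whose first two coordinates differ in precisely one position, which is bipartite; it is only the colour-$1$ class that fails.) So the ``delicate heart'' you identified is not a gap in your reasoning but an error in the formula; the second clause needs to be modified for the Observation to hold.
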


However, for $2^\alpha$ with $\alpha > \omega$, the situation is rather different.

\begin{prop} For any $\Delta$-regressive $c:[2^{\oo+1}]^2\to \oo+1$, there is an $m<\oo$ such that, for all lengths $k \geq 3$, $c$  has monochromatic cycles of colour $m$ and length $k$.
\end{prop}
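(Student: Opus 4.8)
The plan is to isolate a colour $m<\omega$ together with a large monochromatic complete bipartite structure in colour $m$ which additionally contains one monochromatic edge inside one of its sides; such a configuration already forces $c$-monochromatic $m$-coloured cycles of every length $k\geq 3$.

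First I would prove a combinatorial lemma: if $m<\omega$ and $Y,Z\subseteq 2^{\omega+1}$ are disjoint infinite sets with $c(y,z)=m$ for all $y\in Y$, $z\in Z$, and moreover there are $p,q\in Y$ with $c(p,q)=m$, then for every $k\geq 3$ there is a $c$-monochromatic $m$-coloured cycle of length $k$. This is proved by writing the cycles down explicitly. For $k=3$ use $p,q$ and any $z\in Z$. For even $k=2\ell$ with $\ell\geq 2$, pick distinct $y_1,\dots,y_\ell\in Y$ and $z_1,\dots,z_\ell\in Z$ and take the alternating cycle $y_1,z_1,y_2,z_2,\dots,y_\ell,z_\ell$. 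For odd $k=2\ell+1$ with $\ell\geq 1$, take the path $p,z_1,y_1,z_2,y_2,\dots,z_\ell,q$ (using $\ell$ vertices of $Z$ and $\ell-1$ fresh vertices of $Y$) and close it with the edge $\{q,p\}$. Every edge occurring is either a $Y$--$Z$ cross edge or the edge $\{p,q\}$, so the cycle is monochromatic in colour $m$, and since $Y,Z$ are infinite the vertices can always be chosen distinct.

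The substantive part is locating such a configuration. For $n<\omega$ put $B_n=\{x\in 2^{\omega+1}: x(i)=0\text{ for all }i<n\}$, and inside $B_n$ set $Y_n=B_{n+1}$ and $Z_n=\{x\in B_n: x(n)=1\}$, so $B_n=Y_n\sqcup Z_n$, both parts are infinite, and for $y\in Y_n$, $z\in Z_n$ we have $\Delta(y,z)=n$ exactly; hence for $n\geq 1$, $\Delta$-regressivity gives $c(y,z)<n$. I would now run an induction on $n\geq 1$ carrying the hypothesis $R(n)$: \emph{no edge inside $B_n$ has colour $<n-1$.} Here $R(1)$ is vacuous. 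Assuming $R(n)$, each cross edge of $Y_n$--$Z_n$ has colour $<n$ (by $\Delta$-regressivity) and colour $\geq n-1$ (by $R(n)$, since such edges lie inside $B_n$), hence colour exactly $n-1$; so $Y_n$ and $Z_n$ span a complete bipartite graph monochromatic in colour $n-1$. If some edge inside $Y_n$ or inside $Z_n$ has colour $n-1$, the lemma applies with $m=n-1$ and we are done. Otherwise, no edge inside $B_{n+1}\subseteq B_n$ has colour $<n-1$ (by $R(n)$) nor colour $n-1$ (by the current case), i.e. $R(n+1)$ holds, and the induction proceeds.

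Finally I would argue the induction must terminate. If $R(n)$ held for all $n$, consider the two points $z_0,z_1\in 2^{\omega+1}$ that are identically $0$ on $\omega$ with $z_0(\omega)=0$ and $z_1(\omega)=1$. Then $z_0,z_1\in B_n$ for every $n$, so $R(n)$ forces $c(z_0,z_1)\geq n-1$ for all $n$, whence $c(z_0,z_1)=\omega$; but $\Delta(z_0,z_1)=\omega>0$, contradicting $\Delta$-regressivity. Hence the induction halts at some stage, yielding the required $m<\omega$, and the lemma finishes the proof. The step requiring the most care is the ``squeezing'' in the inductive step — that the competing bounds ``colour $<n$'' and ``colour $\geq n-1$'' pin each relevant cross-edge colour to the single value $n-1$ — together with the bookkeeping guaranteeing that, were the induction to run forever, a single pair would be trapped in $B_n$ for all $n$ and thereby forced to receive the colour $\omega$, which is impossible.
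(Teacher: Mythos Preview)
Your proof is correct and follows essentially the same approach as the paper: both arguments inductively ``squeeze'' the colour of edges at $\Delta$-level $n$ to be exactly $n-1$, and then derive a contradiction by finding a pair with $\Delta=\omega$ whose colour would be forced to equal $\omega$. The only differences are organizational---you restrict attention to the single branch $B_n$ and isolate the cycle-building as a standalone lemma, whereas the paper proves the stronger global bound ``$\Delta(f,g)\geq m\Rightarrow c(f,g)\geq m-1$'' and constructs the relevant cycle inline at the step where the induction breaks.
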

\begin{proof}
  Suppose for sake of contradiction that, for all $m < \omega$, there is a
  natural number $k_m \geq 3$ such that $c$ has no monochromatic cycles of
  colour $m$ and length $k_m$.

  \begin{clm}
    For all $m \geq 1$ and $\{f,g\} \in [2^{\omega + 1}]^2$, if
    $\Delta(f,g) \geq m$, then $c(f,g) \geq m-1$.
  \end{clm}

  \begin{proof}
    The proof is by induction on $m$. The claim is trivially true for
    $m = 1$. Fix $m \geq 1$ and suppose
    we have established the claim for $m$. In particular, for all
    $\{h, h'\} \in [2^{\omega + 1}]^2$, if $\Delta(h,h') = m$, then,
    by the fact that $c$ is $\Delta$-regressive, we have
    $c(h,h') = m - 1$. To establish the claim for $m+1$, assume for sake of
    contradiction that we can find $f,g \in 2^{\omega + 1}$ such that
    $\Delta(f,g) \geq m + 1$ but $c(f,g) < m$. By our inductive hypothesis,
    it follows that $c(f,g) = m - 1$.

    We will reach a contradiction by finding a monochromatic cycle of
    colour $m-1$ and length $k_{m-1}$. If $k_{m-1}$ is even, then simply
    choose an injective sequence $\langle h_j \mid j < k_{m-1} \rangle$
    such that $\Delta(h_j, h_{j+1}) = m$ for all $j < k_{m-1} - 1$
    (and hence also $\Delta(h_{k_{m-1} - 1}, h_0) = m$). Then
    $\langle h_j \mid j< k_{m-1} \rangle$ is the desired monochromatic cycle.

    If $k_{m-1}$ is odd, then choose an injective sequence
    $\langle h_j \mid j < k_{m-1} - 2 \rangle$ such that
    \begin{itemize}
      \item $\Delta(h_j, h_{j+1}) = m$ for all $j < k_{m-1} - 3$;
      \item $\Delta(f,h_j) = m$ for all even $j < k_{m-1} - 2$
      (and hence $\Delta(g, h_j) = m$ for all even $j < k_{m-1} - 2$ as well).
    \end{itemize}
    Then $\langle f,g \rangle ^\frown \langle h_j \mid j < k_{m-1} - 2 \rangle$
    is the desired monochromatic cycle.
  \end{proof}

  Now fix $f,g \in 2^{\omega + 1}$ with $\Delta(f,g) = \omega$. By the claim,
  $c(f,g) \geq m-1$ for all $m < \omega$, and hence $c(f,g) = \omega$,
  contradicting the fact that $c$ is $\Delta$-regressive.
\end{proof}

For this reason, the notion of $\Delta$-regressive seems to be too strong to
be of interest for certain questions. We therefore introduce a natural weakening.

\begin{dfn}
  We say that $c:[2^\kappa]^2\to \kappa$ is  \emph{almost $\Delta$-regressive}
  if there is $\mu < \kappa$ such that
  \[
    c(x,y)<\max\{\Delta(x,y),\mu\}
  \]
  for all $x\neq y$.
\end{dfn}

Once again, we ask what monochromatic subgraphs must appear in almost
$\Delta$-regressive colourings, and in particular
whether they must contain monochromatic triangles or odd cycles. Let us first make
an easy observation indicating that consistently there are almost
$\Delta$-regressive colourings avoiding all monochromatic odd cycles.

\begin{prop} \label{not_weak_ch_prop}
  Suppose that $\mu < \kappa$ are infinite cardinals and $2^\mu = 2^\kappa$.
  Then there is an almost $\Delta$-regressive colouring $c:[2^\kappa]^2 \rightarrow
  \kappa$ with no monochromatic odd cycles.
\end{prop}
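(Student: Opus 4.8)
The plan is to exploit the hypothesis $2^\mu = 2^\kappa$ to transfer a good colouring from $[2^\mu]^2$ up to $[2^\kappa]^2$. First I would fix a bijection $\pi : 2^\kappa \to 2^\mu$; equivalently, think of each $x \in 2^\kappa$ as coded by a function $\hat x \in 2^\mu$, in such a way that distinct $x,y$ receive distinct codes. The key point is that $\Delta_\mu(\hat x, \hat y) < \mu$ always (when it is defined), so any colouring built from the codes and landing below $\mu$ will automatically be almost $\Delta$-regressive with witness $\mu$: we will have $c(x,y) < \mu \le \max\{\Delta(x,y), \mu\}$ for all $x \ne y$. Thus the ``almost $\Delta$-regressive'' requirement is essentially free once we route everything through $2^\mu$, and the real content is producing an odd-cycle-free colouring $[2^\mu]^2 \to \mu$ (or just into $\kappa$, but $\mu$ suffices), which we then pull back along $\pi$.

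For the odd-cycle-free colouring on the code space, the natural choice is $\Delta_\mu$ itself: by the Fact quoted in the introduction, $\Delta_\mu : [2^\mu]^2 \to \mu$ has no monochromatic odd cycles. So I would simply define
\[
  c(x,y) = \Delta_\mu(\hat x, \hat y)
\]
whenever $\hat x$ and $\hat y$ disagree somewhere, which (since the coding is injective) is always. One should double-check that this is well-defined and total on $[2^\kappa]^2$: since $\hat x \ne \hat y$ for $x \ne y$, $\Delta_\mu(\hat x, \hat y)$ is always defined, so $c$ is a genuine function $[2^\kappa]^2 \to \mu \subseteq \kappa$. A monochromatic odd cycle for $c$ would, via $\pi$, be a monochromatic odd cycle for $\Delta_\mu$, which is impossible; hence $c$ has no monochromatic odd cycles. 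And $c(x,y) < \mu \le \max\{\Delta(x,y),\mu\}$ for all $x \ne y$, so $c$ is almost $\Delta$-regressive as witnessed by $\mu$.

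There is essentially no hard part here — the only thing to be careful about is the bookkeeping with the coding map and making sure the value of $c$ is always defined (this is exactly where injectivity of $\pi$ is used, and where the classical fact that $\Delta_\mu$ lands in $\mu$ rather than needing $\mu+1$ or more is used). If one wanted a slightly more concrete construction one could take $\mu$ and $\kappa$ both of the relevant forms and write down $\pi$ explicitly using $2^\mu = 2^\kappa = (2^\mu)^\kappa$ or a similar cardinal-arithmetic identity, but for the statement as given an abstract bijection is enough. I would present the argument in two short steps: (i) fix the injection into $2^\mu$ and observe the automatic almost-$\Delta$-regressivity; (ii) pull back $\Delta_\mu$ and invoke the Fact to rule out odd cycles.
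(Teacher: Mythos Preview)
Your proposal is correct and essentially identical to the paper's own proof: fix a bijection $F:2^\kappa\to 2^\mu$ and set $c(x,y)=\Delta(F(x),F(y))$, noting that the range lies in $\mu$ (so almost $\Delta$-regressivity is automatic with witness $\mu$) and that odd-cycle-freeness is inherited from $\Delta_\mu$. The only cosmetic difference is notation.
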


\begin{proof}
  We will in fact find such a function $c$ mapping into $\mu$; it will therefore
  trivially be almost $\Delta$-regressive, as witnessed by $\mu$.

  Fix a bijection $F:2^\kappa \rightarrow 2^\mu$, and define $c:[2^\kappa]^2
  \rightarrow \mu$ by letting $c(x,y) = \Delta(F(x), F(y))$. It is immediate
  that $c$ is as desired.
\end{proof}

In light of this fact, the most interesting case seems to be when $\kappa$ is
uncountable and $2^\mu < 2^\kappa$ for all $\mu < \kappa$.
Note that, under these assumptions, for any function $c:[2^\kappa]^2
\rightarrow \mu$ with $\mu < \kappa$, the Erd\H{o}s-Rado theorem yields
monochromatic subsets of cardinality $\mu^+$. Therefore, unlike the situation in
Proposition~\ref{not_weak_ch_prop}, an example of an almost
$\Delta$-regressive function avoiding small monochromatic sets,
if there is one, needs to have range of size $\kappa$.

\begin{thm} \label{almost_regressive_odd_cycle_thm}
  Suppose that $\kappa$ is an uncountable cardinal and $2^\mu < 2^\kappa$ for all
  $\mu < \kappa$. Suppose also that $c:[2^\kappa]^2 \rightarrow \kappa$ is \emph{almost $\Delta$-regressive}. Then there are $c$-monochromatic odd cycles.
\end{thm}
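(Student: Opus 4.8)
The plan is to prove the contrapositive‑flavoured statement that an almost $\Delta$‑regressive $c:[2^\kappa]^2\to\kappa$ with \emph{no} monochromatic odd cycle forces $2^\mu=2^\kappa$ for some $\mu<\kappa$; under the hypotheses of the theorem this is impossible, so a monochromatic odd cycle must exist. Fix $\mu<\kappa$ witnessing almost $\Delta$‑regressivity of $c$, and, using that $\kappa$ is uncountable, assume without loss of generality that $\mu\geq\aleph_0$. The first step is the observation that, since $c$ has no monochromatic odd cycle, for each $\gamma<\kappa$ the colour class $G_\gamma:=c^{-1}(\{\gamma\})$, viewed as a graph on the vertex set $2^\kappa$, has no odd cycle and is therefore bipartite. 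So I can fix, for every $\gamma<\kappa$, a partition $2^\kappa=P_\gamma\sqcup Q_\gamma$ with every $\gamma$‑coloured edge crossing it, and let $\phi_\gamma:2^\kappa\to 2$ be the characteristic function of $P_\gamma$; thus $c(x,y)=\gamma$ implies $\phi_\gamma(x)\neq\phi_\gamma(y)$.

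The second (and main) step is to bundle the $\phi_\gamma$ into a single map $\Psi:2^\kappa\to 2^\kappa$, defined by $\Psi(x)(\gamma)=\phi_\gamma(x)$. If $x\neq y$ and $\gamma=c(x,y)$, then $\Psi(x)$ and $\Psi(y)$ disagree at coordinate $\gamma$, so
\[
\Delta(\Psi(x),\Psi(y))\leq\gamma=c(x,y)<\max\{\Delta(x,y),\mu\};
\]
in particular $\Psi$ is injective, and it is ``$\mu$‑contractive'': $\Delta(\Psi(x),\Psi(y))<\max\{\Delta(x,y),\mu\}$ whenever $x\neq y$. Iterating, for fixed distinct $x,y$ the ordinals $d_n:=\Delta(\Psi^n(x),\Psi^n(y))$ are well defined (as $\Psi$ is injective) and satisfy $d_{n+1}<\max\{d_n,\mu\}$: the sequence strictly decreases while it stays $\geq\mu$, and once some $d_n<\mu$ every later $d_m$ is $<\mu$ as well. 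Since ordinals are well ordered, it follows that $d_n<\mu$ — equivalently $\Psi^n(x)\restriction\mu\neq\Psi^n(y)\restriction\mu$ — for all sufficiently large $n$.

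The last step is a cardinality count: the map $\Theta:2^\kappa\to(2^\mu)^\omega$ given by $\Theta(x)=\langle\Psi^n(x)\restriction\mu\mid n<\omega\rangle$ is injective by the previous paragraph, so
\[
2^\kappa=|2^\kappa|\leq\bigl|(2^\mu)^\omega\bigr|=(2^\mu)^{\aleph_0}=2^\mu ,
\]
using $\mu\geq\aleph_0$. This gives $2^\mu=2^\kappa$, contradicting the hypothesis, and completes the proof. I do not anticipate a serious obstacle here; the one thing to handle with care is the distinction between $\Delta<\mu$ and $\Delta\leq\mu$ in the iteration (we genuinely need the image points to \emph{differ} below $\mu$), and the conceptual crux is simply recognizing that ``no monochromatic odd cycle'' is precisely a $2$‑colourability statement for every colour class, and that assembling those $2$‑colourings yields a map lowering $\Delta$ quickly enough that its iterates separate points below the fixed level $\mu$, where the smallness hypothesis $2^\mu<2^\kappa$ applies.
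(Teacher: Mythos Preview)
Your proof is correct, and it shares its skeleton with the paper's argument: both fix the bipartitions of the colour classes, assemble them into the injective map $\Psi:2^\kappa\to 2^\kappa$ (the paper calls it $\pi$), observe $\Delta(\Psi(x),\Psi(y))\leq c(x,y)<\max\{\Delta(x,y),\mu\}$, and iterate to force $\Delta(\Psi^n(x),\Psi^n(y))<\mu$ for some $n=n_{xy}$.

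The finishes diverge. The paper records for each pair the data $(n_{xy},\xi_{xy})\in\omega\times\mu$ where $\xi_{xy}=\Delta(\pi^{n_{xy}}(x),\pi^{n_{xy}}(y))$, and applies the Erd\H{o}s--Rado theorem (using $2^\mu<2^\kappa$ to guarantee $|2^\kappa|\geq(2^\mu)^+$) to find a triple on which this data is constant; that triple is then mapped by $\pi^n$ to a $\Delta$-monochromatic triangle, which is impossible. You instead package the iterates into the injection $\Theta:2^\kappa\to(2^\mu)^\omega$, $\Theta(x)=\langle\Psi^n(x)\restriction\mu\mid n<\omega\rangle$, and read off $2^\kappa\leq(2^\mu)^{\aleph_0}=2^\mu$ directly. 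Your route is slightly more elementary in that it avoids invoking Erd\H{o}s--Rado and yields the contrapositive cleanly (it literally exhibits $2^\mu=2^\kappa$ rather than deriving a combinatorial contradiction); the paper's route, on the other hand, is more suggestive of the generalization they note immediately afterward, where one replaces $2^\kappa$ by $\nu^\kappa$ and concludes that some colour class has chromatic number exceeding $\nu$.
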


\begin{proof}
  Suppose this is not the case, and let $\mu < \kappa$ be such that
  $c(x,y) < \max\{\Delta(x,y), \mu\}$ for all $x \neq y$. For each colour
  $\xi < \kappa$, the colour class $c^{-1}(\{\xi\})$ is odd-cycle-free
  and hence, as a graph, bipartite. We can therefore partition $2^\kappa$ into disjoint
  sets $2^\kappa = X^0_\xi \dot{\cup} X^1_\xi$ so that no pair
  from $X^i_\xi$ has colour $\xi$. Define $\pi:2^\kappa\to 2^\kappa$ by letting,
  for all $x \in 2^\kappa$ and all $\xi < \kappa$,
  $\pi(x)(\xi)=i$ if $x\in X^i_\xi$. Thus, if $\{x,y\} \in
  [2^\kappa]^2$ and $c(x,y) = \xi$, then $\pi(x)(\xi) \neq \pi(y)(\xi)$. Therefore $\pi$ must be injective, and, if $\Delta(x,y) \geq \mu$, then
  \[
    \Delta(\pi(x),\pi(y))\leq c(x,y)<\Delta(x,y).
  \]

 So, for any $x\neq y$, there is a minimal $n=n_{xy}<\oo$ so that
 \[
  \xi_{xy} := \Delta(\pi^n(x),\pi^n(y))<\mu.
 \]
 Indeed, otherwise we could find a strictly decreasing infinite sequence of
 ordinals.

 Apply the Erd\H os-Rado theorem to the map $g:[2^\kappa]^2 \rightarrow
 \omega \times \mu$ taking a pair $\{x,y\}$ to $(n_{xy},\xi_{xy})$ to find
 three distinct $x,y,z$ in $2^\kappa$ and $(n,\xi)$ in $\omega \times \mu$ so
 that $g `` [\{x,y,z\}]^2= \{(n,\xi)\}$. This means that the function
 $\Delta$ is constant, with value $\xi$, on the 3-element set
 $\{\pi^n(x),\pi^n(y),\pi^n(z)\}$, a contradiction to the fact that $\Delta$
 is triangle-free.
\end{proof}

At this point, we do not know if Theorem \ref{almost_regressive_odd_cycle_thm}
can be improved to yield the necessary existence of monochromatic triples.
The above argument can easily be generalized to prove the following result,
whose verification we leave to the reader.

\begin{thm}
  Suppose that $2 \leq \nu < \kappa$ are cardinals, with $\kappa$ uncountable,
  $2^\mu < 2^\kappa$ for all
  $\mu < \kappa$, and $c:[\nu^\kappa]^2 \rightarrow \kappa$ is
  almost $\Delta$-regressive. Then there is $\xi < \kappa$ such that the graph
  $(\nu^\kappa, c^{-1}(\{\xi\}))$ has chromatic number greater than $\nu$.
\end{thm}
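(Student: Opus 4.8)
The plan is to run the argument of Theorem~\ref{almost_regressive_odd_cycle_thm} in a slightly more flexible form, replacing ``a colour class has chromatic number $\le 2$, so $2^\kappa$ splits into two $\xi$-independent pieces'' by ``a colour class has chromatic number $\le \nu$, so $\nu^\kappa$ splits into $\nu$-many $\xi$-independent pieces''. Concretely, suppose toward a contradiction that for every $\xi < \kappa$ the graph $(\nu^\kappa, c^{-1}(\{\xi\}))$ has chromatic number at most $\nu$, and fix $\mu < \kappa$ witnessing that $c$ is almost $\Delta$-regressive. For each $\xi$, fix a proper colouring $\phi_\xi : \nu^\kappa \to \nu$ of $c^{-1}(\{\xi\})$ and define $\pi : \nu^\kappa \to \nu^\kappa$ by $\pi(x)(\xi) = \phi_\xi(x)$. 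Then, exactly as in the proof of Theorem~\ref{almost_regressive_odd_cycle_thm}, $c(x,y) = \xi$ forces $\pi(x)(\xi) \ne \pi(y)(\xi)$, so $\Delta(\pi(x),\pi(y)) \le c(x,y)$; hence $\pi$ is injective, and whenever $\Delta(x,y) \ge \mu$ we get $\Delta(\pi(x),\pi(y)) \le c(x,y) < \Delta(x,y)$. Iterating, for each $x \ne y$ there is a least $n = n_{xy} < \omega$ with $\xi_{xy} := \Delta(\pi^{n_{xy}}(x),\pi^{n_{xy}}(y)) < \mu$ (else the $\Delta$-values along the iteration would form an infinite strictly decreasing sequence of ordinals), and we collect these into a map $g : [\nu^\kappa]^2 \to \omega \times \mu$ by $\{x,y\} \mapsto (n_{xy},\xi_{xy})$.

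At this point, rather than merely extracting a $3$-element set, I would use the Erd\H{o}s--Rado theorem to produce a \emph{large} homogeneous set. Put $\lambda = \max\{\omega, \mu, \nu\}$; this is an infinite cardinal below $\kappa$, and since $|\omega \times \mu| \le \lambda$ while the standing hypothesis gives $2^\lambda < 2^\kappa = |\nu^\kappa|$, we have $|\nu^\kappa| \ge (2^\lambda)^+$ and hence $\nu^\kappa \to (\lambda^+)^2_\lambda$. Fix a $g$-homogeneous $Y \subseteq \nu^\kappa$ with $|Y| = \lambda^+$, say with value $(n,\xi)$. Then $\Delta$ takes the constant value $\xi$ on $[\pi^n[Y]]^2$, and $|\pi^n[Y]| = \lambda^+ > \nu$ since $\pi^n$ is injective. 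But the easy generalization of the introductory Fact shows that any $\Delta$-monochromatic subset of $\nu^\kappa$ has cardinality at most $\nu$: if $\Delta$ is constantly $\xi$ on $S$, then $s \mapsto s(\xi)$ is injective from $S$ into $\nu$. This contradiction completes the proof.

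The only point that genuinely needs care — and the reason one cannot transcribe the proof of Theorem~\ref{almost_regressive_odd_cycle_thm} verbatim — is that for $\nu \ge 3$ the colouring $\Delta$ on $\nu^\kappa$ \emph{does} have monochromatic triples, so a $3$-element $g$-homogeneous set is no longer a contradiction; one instead needs a $g$-homogeneous set of size strictly above $\nu$, which forces the colour parameter in the Erd\H{o}s--Rado step to be at least $\nu$, hence $\lambda = \max\{\omega,\mu,\nu\}$ rather than $\max\{\omega,\mu\}$. Everything else is as before, with the cardinal arithmetic going through because $2 \le \nu < \kappa$ forces $|\nu^\kappa| = 2^\kappa$, and the hypothesis that $2^\mu < 2^\kappa$ for all $\mu < \kappa$ then gives $2^\lambda < 2^\kappa$. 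For $\nu = 2$ this recovers Theorem~\ref{almost_regressive_odd_cycle_thm}, since then a $\Delta$-monochromatic set of size $3$ is just a monochromatic triangle and ``chromatic number $> 2$'' means ``contains an odd cycle''.
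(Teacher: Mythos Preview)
Your argument is correct and is precisely the generalization the paper has in mind; note that the paper does not actually give a proof of this theorem, saying only that ``the above argument can easily be generalized \ldots\ whose verification we leave to the reader.'' You have correctly identified the one point where the proof of Theorem~\ref{almost_regressive_odd_cycle_thm} needs adjustment: since $\Delta$ on $\nu^\kappa$ admits monochromatic sets of size up to $\nu$, one must extract a $g$-homogeneous set of size exceeding $\nu$ rather than merely a triple, which is exactly what setting $\lambda = \max\{\omega,\mu,\nu\}$ and invoking $(2^\lambda)^+ \to (\lambda^+)^2_\lambda$ achieves.
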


If we assume that $\kappa$ is a large cardinal, we can obtain a stronger
result. Recall the following large cardinal notion.

\begin{dfn}
  Let $\kappa$ be an uncountable cardinal.
  \begin{enumerate}
    \item A function $f:[\kappa]^2 \rightarrow \kappa$ is \emph{regressive}
    if $f(\alpha, \beta) < \alpha$ for all $0 < \alpha < \beta < \kappa$.
    \item $\kappa$ is an \emph{almost ineffable cardinal} if, for every
    regressive function $f:[\kappa]^2 \rightarrow \kappa$, there is
    an $f$-monochromatic set of cardinality $\kappa$.
  \end{enumerate}
\end{dfn}

We note that this is different from the usual definition of almost ineffability
but was proven to be equivalent by Baumgartner
\cite[Theorem 5.2]{baumgartner_ineffability}. To put almost ineffability into
the context of possibly more familiar large cardinal notions, it is easily
seen that all measurable cardinals are almost ineffable and all almost
ineffable cardinals are weakly compact.

\begin{prop}
  Suppose that $\kappa$ is an almost ineffable cardinal and
  $c:[2^\kappa]^2\to \kappa$ is almost $\Delta$-regressive.  Then there are
  $c$-monochromatic subsets of size $\kappa$.
\end{prop}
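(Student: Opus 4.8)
The plan is to convert the almost $\Delta$-regressivity of $c$ into an honest regressive colouring of $[\kappa]^2$, where almost ineffability applies directly. Fix $\mu < \kappa$ witnessing that $c$ is almost $\Delta$-regressive, so that $c(x,y) < \max\{\Delta(x,y), \mu\}$ for all $x \neq y$. For each $\alpha < \kappa$, let $x_\alpha \in 2^\kappa$ be the characteristic function of $\{\alpha\}$, i.e.\ $x_\alpha(\alpha) = 1$ and $x_\alpha(\gamma) = 0$ for all $\gamma \neq \alpha$. Then $\langle x_\alpha \mid \alpha < \kappa \rangle$ is injective and, crucially, $\Delta(x_\alpha, x_\beta) = \min\{\alpha, \beta\}$ for all $\alpha \neq \beta$; this is the one device that makes everything work, since it turns the ``$\max$ with $\mu$'' bound on $c$ into genuine regressivity above $\mu$.

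Next I would define $f:[\kappa]^2 \to \kappa$ by setting, for $\alpha < \beta < \kappa$,
\[
  f(\alpha,\beta) =
  \begin{cases}
    c(x_\alpha, x_\beta) & \text{if } c(x_\alpha, x_\beta) < \alpha,\\
    0 & \text{otherwise.}
  \end{cases}
\]
By construction $f$ is regressive, so by almost ineffability of $\kappa$ there are $A \in [\kappa]^\kappa$ and $\xi < \kappa$ with $f \restriction [A]^2$ constant with value $\xi$. Now set $A' = \{\alpha \in A \mid \alpha > \mu\}$, which still has cardinality $\kappa$ since at most $\mu+1 < \kappa$ ordinals are removed. For any $\alpha < \beta$ in $A'$ we have $\Delta(x_\alpha, x_\beta) = \alpha > \mu$, hence $c(x_\alpha, x_\beta) < \max\{\alpha, \mu\} = \alpha$, so the first clause of the definition of $f$ applies and $c(x_\alpha, x_\beta) = f(\alpha,\beta) = \xi$. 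Therefore $\{x_\alpha \mid \alpha \in A'\}$ is a $c$-monochromatic subset of $2^\kappa$ of size $\kappa$, as required.

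There is essentially no serious obstacle here: the truncation to $0$ in the definition of $f$ and the passage from $A$ to $A'$ are just bookkeeping to absorb $\mu$, and everything else is immediate once one chooses the ``unit vectors'' $x_\alpha$. I would also remark that this argument uses only that $\kappa$ is almost ineffable (and hence a strong limit), not the hypothesis $2^\mu < 2^\kappa$ for $\mu < \kappa$ that appears in the surrounding discussion; indeed the same proof shows more generally that an almost ineffable $\kappa$ has the property that every regressive-above-$\mu$ colouring of $[\kappa]^2$ has a monochromatic set of size $\kappa$.
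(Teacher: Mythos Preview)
Your proof is correct and follows essentially the same approach as the paper: choose a sequence $\langle x_\alpha \mid \alpha < \kappa \rangle$ in $2^\kappa$ with $\Delta(x_\alpha, x_\beta) = \min\{\alpha,\beta\}$, convert $c$ into a regressive function on $[\kappa]^2$, and apply almost ineffability. The paper states the sequence abstractly (``fix $\langle y_\xi \rangle$ with $\Delta(y_\zeta, y_\xi) = \zeta$ for $\zeta < \xi$'') while you give the concrete instance of characteristic functions of singletons, and the truncation conventions for defining $f$ differ cosmetically, but the argument is the same.
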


Note that, in general, we cannot hope to find monochromatic sets of size $\kappa^+$
by the existence of Sierpinski colourings from $[2^\kappa]^2$ to $2$ witnessing
$2^\kappa \not\rightarrow (\kappa^+)^2_2$.

\begin{proof}
  Let $\mu < \kappa$ witness that $c$ is almost $\Delta$-regressive. Fix
  an injective sequence $\langle y_\xi \mid \xi < \kappa \rangle$ from
  $2^\kappa$ so that $\Delta(y_\zeta,y_\xi)=\zeta$ for all $\zeta<\xi<\kappa$.
  Notice that, for all $\mu \leq \zeta < \xi < \kappa$, we have
  $c(y_\zeta, y_\xi) < \zeta$. Now define a function $f:[\kappa]^2 \rightarrow
  \kappa$ by letting $f(\zeta, \xi) = c(y_\zeta, y_\xi)$ if
  $\mu \leq \zeta < \xi < \kappa$ and $f(\zeta, \xi) = 0$ otherwise.
  Then $f$ is regressive, so, by the almost ineffability of $\kappa$,
  there is an $f$-monochromatic set $A \subseteq \kappa$ of size
  $\kappa$. But then $\{y_\xi \mid \xi \in A \setminus \mu\}$ is a $c$-monochromatic
  set of size $\kappa$.
\end{proof}

We can use similar ideas to obtain consistency results for small cardinals
$\kappa$, assuming the consistency of a measurable cardinal. For example, a special
case of the following theorem yields the
consistency of the assertion that every almost $\Delta$-regressive colouring
of $[2^{\omega_1}]^2$ has uncountable monochromatic subsets.

\begin{thm}
  Suppose that $\kappa < \lambda$ are regular uncountable cardinals, with
  $\lambda$ measurable, and let $\bb{P} = \mathrm{Add}(\kappa, \lambda)$
  be the poset to add $\lambda$-many Cohen subsets to $\kappa$.
  Then, in $V^{\bb{P}}$, every almost $\Delta$-regressive colouring
  $c:[2^\kappa]^2 \rightarrow \kappa$ has monochromatic subsets of size $\kappa$.
\end{thm}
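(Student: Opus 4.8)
The plan is to work in $V^{\bb P}$ and follow the strategy of the proof of the proposition above for almost ineffable cardinals, with the partition property of $\lambda$ in the ground model replaced by a lifted elementary embedding.

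First I would record two reductions. One: $2^\kappa=\lambda$ in $V^{\bb P}$. Since $\lambda$ is measurable, hence inaccessible, in $V$, the poset $\bb P=\mathrm{Add}(\kappa,\lambda)$ has cardinality $\lambda$ and the $\lambda$-chain condition, so there are only $\lambda$-many nice $\bb P$-names for subsets of $\kappa$, while $\bb P$ adds $\lambda$ new such sets. Two: it is enough to treat genuinely $\Delta$-regressive colourings, since if $\mu<\kappa$ witnesses that $c$ is almost $\Delta$-regressive then $c$ restricted to $\{x\in 2^\kappa:x\uhr\mu\equiv0\}$ is $\Delta$-regressive, this set also has size $\lambda$, and any monochromatic set for the restriction is monochromatic for $c$. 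By homogeneity of $\bb P$ we may also assume a name $\dot c$ for such a colouring is decided by the trivial condition.

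Now fix a normal measure on $\lambda$ in $V$ with ultrapower $j:V\to M$, so $\mathrm{crit}(j)=\lambda$, ${}^{\lambda}M\subseteq M$, and $j(\bb P)=\mathrm{Add}(\kappa,j(\lambda))^M$ factors in $M$ as $\bb P\times\bb Q$ with $\bb Q=\mathrm{Add}(\kappa,[\lambda,j(\lambda)))^M$, which is $<\kappa$-closed. Since $|\bb P|=\lambda<j(\lambda)$, L\'evy--Solovay gives that $j(\lambda)$ is still measurable in $M[G]$ (with normal measure $W$) for any $\bb P$-generic $G$; moreover $\bb Q$ is $j(\lambda)$-c.c.\ in $M[G]$ (by a $\Delta$-system argument), so the standard antichain-approximation argument shows that the filter $\bar W$ on $j(\lambda)$ generated by $W$ in a $\bb Q$-generic extension of $M[G]$ remains normal and $j(\lambda)$-complete. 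Forcing with $\bb Q$ over $V[G]$ produces $H$ which is $\bb Q$-generic over $M[G]$, and since conditions of $\bb P$ have size $<\kappa<\mathrm{crit}(j)$ the embedding lifts to $\hat j:V[G]\to N:=M[G][H]$. One checks that $\hat j(c)$ is a $\Delta$-regressive colouring on $[(2^\kappa)^N]^2$ which agrees with $c$ on $(2^\kappa)^{V[G]}$; the latter set lies in $N$ and still has cardinality $\lambda$ there, and in addition $N$ contains $j(\lambda)$-many fresh Cohen subsets of $\kappa$ coming from $H$.

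Since $\hat j$ is elementary and $\hat j(\kappa)=\kappa$, it now suffices to produce in $N$ a $\hat j(c)$-monochromatic set of size $\kappa$, and this is where the work lies. I would first arrange that the fresh Cohen subsets under consideration all vanish below $\mu$, so that every pair among them — and every pair consisting of a fresh point and an old point vanishing below $\mu$ — has $\Delta>0$ and hence $\hat j(c)$-colour strictly below the corresponding value of $\Delta$. Then I would run a Rowbottom-style construction on the set $[\lambda,j(\lambda))$ of indices of the fresh points: using $\kappa^{+}$-completeness of $\bar W$ to isolate a single colour $\xi^{*}<\kappa$ together with a $\bar W$-positive set of indices whose associated fresh points are pairwise $\hat j(c)$-monochromatic of colour $\xi^{*}$, I would extract a recursion of length $\kappa$ and take the resulting $\kappa$ fresh points as the monochromatic set; elementarity of $\hat j$ then transfers its existence back to $V[G]$. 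The hard part will be making this recursion go through limit stages, because $\bar W$ is only a filter and not an ultrafilter, so a $\bar W$-positive colour class need not be $\bar W$-large and an intersection of positive sets may be empty; overcoming this should require exploiting $\Delta$-regressivity — which confines each colour class $\{y:\hat j(c)(x,y)=\zeta\}$ to a single basic clopen neighbourhood of $x$ — together with the fact that $\bar W$ extends an honest measure on $j(\lambda)$, or else passing first to a generic ultrafilter extending $\bar W$ through the (suitably closed) quotient forcing and carrying out the diagonalisation there.
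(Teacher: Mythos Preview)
Your reductions and the lifting setup are fine: $2^\kappa=\lambda$ in $V[G]$, the restriction to $\{x:x\uhr\mu\equiv0\}$ makes the colouring genuinely $\Delta$-regressive, the embedding lifts to $\hat j:V[G]\to N$, and by elementarity it suffices to find a $\hat j(c)$-monochromatic set of size $\kappa$ in $N$. The problem is that you do not actually find one. You explicitly flag the difficulty --- $\bar W$ is only a filter --- and neither of your proposed remedies works. The $\Delta$-regressivity observation (that the colour-$\zeta$ neighbours of $x$ lie in $N_{x\uhr(\zeta+1)}$) gives no purchase on the diagonalisation: at a limit stage you still need to intersect $\bar W$-positive sets, and positive sets in a filter need not have positive (or even nonempty) intersection. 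Passing to a generic ultrafilter extending $\bar W$ takes you to a further extension $N[K]$, but your reflection step uses elementarity between $V[G]$ and $N$, not $N[K]$; a monochromatic set found only in $N[K]$ tells you nothing about $V[G]$. More structurally, elementarity cuts both ways: whatever partition properties $j(\lambda)$ has in $N$ are exactly those $\lambda$ has in $V[G]$, and since $2^\kappa\not\to(\kappa^+)^2_2$ holds in $V[G]$, $j(\lambda)$ is not even weakly compact in $N$. So the lifting by itself buys no usable combinatorics inside $N$.

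The paper avoids all of this by never lifting. It works with the generic Cohen functions $f_\alpha$ ($\alpha<\lambda$) and builds, by recursion of length $\kappa$, ordinals $\alpha_\xi$, sets $X_\xi\in U$, colours $i_\xi<\delta_\xi$, and conditions $q_\xi\in G$ and $p_{\alpha_\xi\beta}$ (for $\beta\in X_\xi$) so that each $p_{\alpha_\xi\beta}$ forces $\Delta(f_{\alpha_\xi},f_\beta)=\delta_\xi$ and $c(f_{\alpha_\xi},f_\beta)=i_\xi$, with suitable coherence. The crucial point is that $\bb P$ is ${<}\kappa$-closed, so each initial segment of the construction lives in $V$, where $U$ is a genuine normal \emph{ultra}filter; a density argument below the amalgamated condition $r_\xi$ uses normality of $U$ (pressing down on the map $\beta\mapsto p_{\alpha^*\beta}\uhr\beta$) to produce $q_\xi$ together with the required $U$-large $X_\xi$. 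Genericity then puts $q_\xi$ into $G$. Since the $\delta_\xi$ form a club in $\kappa$ and $i_\xi<\delta_\xi$, Fodor gives a stationary $S$ with constant $i_\xi=i$, and $\{f_{\alpha_\xi}:\xi\in S\}$ is monochromatic. The measure is used in $V$, where it is an ultrafilter; the recursion is of length $\kappa$, not $\lambda$; and Fodor on $\kappa$, not Rowbottom on $\lambda$, extracts the homogeneous set.
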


\begin{proof}
  We think of conditions in $\bb{P}$ as being partial functions from
  $\lambda \times \kappa$ to $2$ of cardinality less than $\kappa$, ordered
  by reverse inclusion.
  For $p \in \bb{P}$, let
  \[
    D_p = \{\alpha < \lambda \mid \text{there is } i < \kappa
    \text{ such that } (\alpha, i) \in \dom(p)\}.
  \]
  Slightly abusing notation, if $D \subseteq D_p$, then let $p \restriction
  D$ denote $p \restriction (\dom(p) \cap (D \times \kappa))$.
  For $\alpha < \lambda$, let $\dot{f}_\alpha$ be the canonical $\bb{P}$-name
  for the $\alpha^{\mathrm{th}}$ Cohen subset added. In other words, for
  all $p \in \bb{P}$ and $i < \kappa$, if $(\alpha, i) \in \dom(p)$,
  then $p \Vdash ``\dot{f}_\alpha(i) = p(\alpha, i)"$.
  Fix a $\bb{P}$-name $\dot{c}$ for an almost $\Delta$-regressive colouring from
  $[2^\kappa]^2$ to $\kappa$, and fix a condition $p_0 \in \bb{P}$ and a
  cardinal $\mu < \kappa$ such that
  \[
    p_0 \Vdash_{\bb{P}}``\dot{c}(x, y) < \max(\Delta(x, y),
    \check{\mu}) \text{ for all } x, y \in 2^\kappa".
  \]
  Fix also a normal measure $U$ over $\lambda$.

  Let $G$ be $\bb{P}$-generic over $V$ with $p_0 \in G$, and work in $V[G]$.
  We will recursively construct
  \begin{itemize}
    \item an increasing sequence $\langle \alpha_\xi \mid \xi < \kappa \rangle$
    of ordinals below $\lambda$;
    \item sets $X_\xi \in U$ and ordinals $i_\xi < \delta_\xi < \varepsilon_\xi
    < \kappa$ for each $\xi < \kappa$, with $\langle \delta_\xi \mid \xi < \kappa
    \rangle$ increasing and continuous;
    \item conditions $q_\xi$ and $p_{\alpha_\xi \beta}$ in $\bb{P}$ for all
    $\xi < \kappa$ and $\beta \in X_\xi$;
    \item a $\subseteq$-increasing sequence of functions $\langle
    g_\xi : \varepsilon_\xi \rightarrow 2 \mid \xi < \kappa \rangle$
  \end{itemize}
  such that for all $\xi < \kappa$ and $\beta \in X_\xi$,
  \begin{enumerate}
    \item $\dom(p_{\alpha_\xi \beta}) \subseteq D_{p_{\alpha_\xi \beta}}
    \times \varepsilon_\xi$;
    \item $\varepsilon_\xi \leq \delta_{\xi + 1}$;
    \item $p_{\alpha_\xi \beta} \Vdash ``\check{\delta}_\xi =
    \Delta(\dot{f}_{\alpha_\xi}, \dot{f}_\beta) \text{ and }
    \dot{f}_\beta \restriction \check{\varepsilon}_\xi = \check{g}_\xi"$;
    \item $p_{\alpha_\xi \beta} \leq p_0$ and $p_{\alpha_\xi \beta} \Vdash
    ``\dot{c}(\dot{f}_{\alpha_\xi},
    \dot{f}_\beta) = \check{i}_\xi"$;
    \item $\{D_{p_{\alpha_\xi \beta}} \mid \beta \in X_\xi\}$ forms a $\Delta$-system
    with root $D_\xi$, and $p_{\alpha_\xi \beta} \restriction D_\xi = q_\xi$;
    \item for all $\zeta < \xi$, we have
    \begin{enumerate}
      \item $X_\xi \cup \{\alpha_\xi\} \subseteq X_\zeta$;
      \item $p_{\alpha_\xi \beta}$ extends both $p_{\alpha_\zeta \alpha_\xi}$
      and $p_{\alpha_\zeta \beta}$;
      \item $p_{\alpha_\zeta \alpha_\xi}, q_\xi \in G$.
    \end{enumerate}
  \end{enumerate}

We set $X_{-1} = \lambda$ and $\delta_{-1} = \mu$, and fix $\varepsilon_{-1}
< \kappa$ such that $\dom(p_0) \subseteq D_{p_0} \times
\varepsilon_{-1}$, and we describe
the general step of the recursion. Suppose that $\xi < \kappa$ is fixed
and that we have constructed the above objects for all $\zeta < \xi$.
Notice that, since $\bb{P}$ is $\kappa$-closed, the construction thus
far all lives in $V$. Let
\[
  r_\xi = \bigcup_{\eta < \zeta < \xi} p_{\alpha_\eta \alpha_\zeta}
  \cup \bigcup_{\zeta < \xi} q_\eta.
\]
(If $\xi = 0$, let $r_\xi = p_0$.) By the closure of $\bb{P}$, $r_\xi$ is
in fact a condition in $\bb{P}$ and is in $G$. Set $\delta_\xi =
\sup\{\varepsilon_\zeta \mid \zeta < \xi\}$,
and note that $\dom(r_\xi) \subseteq D_{r_\xi} \times \delta_\xi$. Move now to $V$.

\begin{clm}
  Let $E_\xi$ be the set of $q \leq r_\xi$ for which there exist
  $\alpha^* < \lambda$, $i^* < \varepsilon^* < \kappa$,
  $g^* : \varepsilon^* \rightarrow 2$,
  $X^* \in U$, and conditions $p_{\alpha^* \beta}$ for $\beta \in X^*$ such
  that
  \begin{itemize}
    \item $i^* < \delta_\xi < \varepsilon^*$;
    \item $\dom(p_{\alpha^*, \beta}) \subseteq D_{p_{\alpha^*, \beta}} \times
    \varepsilon^*$ for all $\beta \in X^*$;
    \item $p_{\alpha^* \beta} \Vdash ``\check{\delta}_\xi =
    \Delta(\dot{f}_{\alpha^*}, \dot{f}_\beta) \text{ and }
    \dot{f}_\beta \restriction \check{\varepsilon}^* = \check{g}^*"$ for
    all $\beta \in X^*$;
    \item $p_{\alpha^* \beta} \Vdash ``\dot{c}(\dot{f}_{\alpha^*},
    \dot{f}_\beta) = \check{i}^*"$ for all $\beta \in X^*$;
    \item $\{D_{p_{\alpha^* \beta}} \mid \beta \in X^*\}$ forms a $\Delta$-system
    with root $D^*$, and $p_{\alpha^* \beta} \restriction D^* = q$ for all
    $\beta \in X^*$;
    \item $X^* \cup \{\alpha^*\} \subseteq \bigcap\{X_\zeta \mid \zeta < \xi\}$;
    \item $q$ extends $p_{\alpha_\zeta \alpha_\xi}$ for all $\zeta < \xi$;
    \item $p_{\alpha^* \beta}$ extends $p_{\alpha_\zeta \beta}$ for all
    $\zeta < \xi$ and $\beta \in X^*$.
  \end{itemize}
  Then $E_\xi$ is dense in $\bb{P}$ below $r_\xi$.
\end{clm}

\begin{proof}
  Let $t \leq r_\xi$ be arbitrary, and let $Y$ be the set of all
  $\beta \in \bigcap\{X_\zeta \mid \zeta < \xi\}$ such that
  $\beta > \max\{\alpha_\zeta, \sup(D_t)\}$ and
  $D_{p_{\alpha_\zeta \beta}} \setminus D_{q_\zeta}$ is disjoint
  from $D_t$ for all $\zeta < \xi$. Note that, by our recursion hypothesis,
  we have that, for all $\zeta < \xi$, the sequence
  $\langle D_{p_{\alpha_\zeta \beta}} \setminus D_{q_\zeta} \mid
  \beta \in X_\zeta \rangle$ consists of pairwise disjoint sets, and
  therefore we know that $Y \in U$.
  Let $\alpha^* = \min(Y)$, and let $t^* = t \cup \bigcup_{\zeta < \xi}
  p_{\alpha_\zeta \alpha^*}$. By our choice of $\alpha^*$,
  $t^*$ is a function and thus a condition in $\bb{P}$.

  Let $Y^* = Y \setminus (\alpha + 1)$. For all $\beta \in Y^*$,
  let $t^*_\beta = t^* \cup \bigcup\{p_{\alpha_\zeta \beta} \mid \zeta < \xi\}$.
  By our choice of $Y^*$, $t^*_\beta$ is a function and hence a condition in
  $\bb{P}$. Let $g^- = \bigcup_{\zeta < \xi} g_\zeta$, and notice that
  $t^*_\beta \Vdash ``\dot{f}_{\alpha^*} \restriction \check{\delta}_\xi =
  \dot{f}_\beta \restriction \check{\delta}_\xi = \check{g}^-"$, and $t^*_\beta$
  does not decide the value of $\dot{f}_{\alpha^*}(\eta)$ or
  $\dot{f}_\beta(\eta)$ for any $\delta_\xi \leq \eta < \kappa$.
  We can therefore fix a condition $p_{\alpha^* \beta} \leq t^*_\beta$ such that
  \begin{itemize}
    \item $p_{\alpha^* \beta} \Vdash ``\Delta(\dot{f}_{\alpha^*} \dot{f}_\beta)
    = \check{\delta}_\xi"$; and
    \item $p_{\alpha^* \beta}$ decides the value of $\dot{c}(\dot{f}_{\alpha^*},
    \dot{f}_\beta)$ to be equal to some $i^*_{\beta} < \delta_\xi$.
  \end{itemize}
  Let $\epsilon_{\alpha^* \beta} < \kappa$ be such that $\dom(p_{\alpha^* \beta})
  \subseteq D_{p_{\alpha^* \beta}} \times \epsilon_{\alpha^*, \beta}$. Without
  loss of generality, we may assume that $\{\beta\} \times
  \epsilon_{\alpha^* \beta} \subseteq
  \dom(p_{\alpha^* \beta})$, so we can define a function
  $g_{\alpha^* \beta} : \epsilon_{\alpha^* \beta} \rightarrow 2$ by letting
  $g_{\alpha^* \beta}(j) = p_{\alpha^* \beta}(\beta, j)$ for all $j <
  \epsilon_{\alpha^* \beta}$.
  Now consider the map $h$ that sends each $\beta \in Y^*$ to the tuple
  $\left \langle p_{\alpha^* \beta} \restriction (D_{p_{\alpha^* \beta}}
  \cap \beta), ~ i^*_\beta, ~ \epsilon_{\alpha^* \beta}, ~ g_{\alpha^* \beta}
  \right \rangle$. Then $h$ can be coded as a regressive function, defined
  on a set in $U$, so, by the normality of $U$, we can find a
  set $X^* \subseteq Y^*$, a condition
  $q \in \bb{P}$, ordinals $i^*$ and $\varepsilon^*$ such that
  $i^* < \delta_\xi < \varepsilon^* < \kappa$, and a function
  $g^* : \varepsilon^* \rightarrow 2$ such that
  \begin{itemize}
    \item $X^* \in U$;
    \item $h(\beta) = \langle q, ~ i^*, ~ \varepsilon^*, ~ g^* \rangle$
    for all $\beta \in X^*$;
    \item $D_{p_{\alpha^* \beta}} \subseteq \beta'$ for all $\beta < \beta' \in
    X^*$.
  \end{itemize}
  Then $q \leq t$ is as in the statement of the claim, as witnessed by $\alpha^*,
  i^*, \varepsilon^*, g^*, X^*$, and $\{p_{\alpha^* \beta} \mid \beta \in X^*\}$.
\end{proof}

Now move back to $V[G]$. By the claim and the fact that $r_\xi \in G$, we can
find $q_\xi \in E_\xi \cap G$, as witnessed by $\alpha_\xi < \lambda$,
$i_\xi < \varepsilon_\xi < \kappa$, $g_\xi : \varepsilon_\xi \rightarrow 2$,
$X_\xi \in U$, and conditions
$p_{\alpha_\xi \beta} \in \bb{P}$ for all $\beta \in X_\xi$. It is easily
verified that these objects are as desired, thus completing the recursive
construction.

Now the map sending $\delta_\xi$ to $i_\xi$ for all $\xi < \kappa$ is a
regressive function defined on a club of ordinals in $\kappa$, so there is a
fixed $i < \kappa$ and a stationary $S \subseteq \kappa$ such that
$i_\xi = i$ for all $\xi \in S$. It follows that, in $V[G]$,
$\{f_{\alpha_\xi} \mid \xi \in S\}$ is a monochromatic subset for $c$ of size
$\kappa$.
\end{proof}

We end this section with a discussion indicating that \emph{sufficiently
nice} almost $\Delta$-regressive functions necessarily have large
monochromatic sets, regardless of cardinal arithmetic.
We consider $2^\kappa$ as a topological space with the ${<}\kappa$-supported
product topology, i.e., basic open sets are of the form $N_s :=
\{x \in 2^\kappa \mid s \subseteq x\}$ for $s \in 2^{<\kappa} = \bigcup_{\alpha <
\kappa} 2^\alpha$,
and we consider $[2^\kappa]^2$ as a topological space inheriting the
subspace topology from the product space $2^\kappa \times 2^\kappa$.
Recall that, in the space $2^\kappa$, the collection
of \emph{$\lambda$-Borel} sets, where $\lambda \leq \kappa$ is an infinite
cardinal, is the smallest collection of sets containing the open sets and
closed under complementation and unions and intersections of size $\lambda$.
Since we will only be working with $\kappa$-Borel sets, we will simply use
the word ``Borel'' to mean ``$\kappa$-Borel''. A subset of $2^\kappa$ is
\emph{meagre} if it is the union of $\kappa$-many nowhere dense sets.
Then it is readily established that $2^\kappa$ satisfies a version of the
Baire Category Theorem, i.e., it is
not the union of $\kappa$-many meagre sets.
Also, Borel subsets of $2^\kappa$ have the Baire property:
if $Y \subseteq 2^\kappa$ is Borel, then there is an open set
$U \subseteq 2^\kappa$ such that $Y \triangle U$ is meagre.
All of these comments carry over to the space $[2^\kappa]^2$, as well.
(See \cite{generalized_dst}, particularly Chapter 4, for these facts and
more on the descriptive set theory of $2^\kappa$ and $\kappa^\kappa$.)

We also consider $\kappa$ as a topological space with the discrete topology.
With these assumptions, note that $\Delta_\kappa : [2^\kappa]^2 \rightarrow
\kappa$ is continuous and thus Borel. The following theorem indicates a way
in which $\Delta_\kappa$ is provably a minimal Borel coloring with no
monochromatic sets of size $\kappa$.

\begin{thm}
  Suppose that $\kappa$ is an uncountable regular cardinal and that $c:[2^\kappa]^2
  \rightarrow \kappa$ is almost $\Delta$-regressive and Borel. Then there are
  $c$-monochromatic sets of size $\kappa$.
\end{thm}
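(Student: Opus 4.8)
The plan is to run an analogue of the proof of the previous theorem inside $V$, with Baire category in $2^\kappa$ taking the place of the normal measure on $\lambda$. We use throughout the facts recalled above: $2^\kappa$, every basic clopen box $N_s$ ($s\in 2^{<\kappa}$), and $[2^\kappa]^2$ are all homeomorphic to $2^\kappa$, hence are $\kappa$-Baire spaces in which a union of $\kappa$-many meagre sets is again meagre, and every $\kappa$-Borel set has the Baire property. Fix $\mu<\kappa$, which we may take infinite, witnessing that $c$ is almost $\Delta$-regressive.

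The only point at which Borelness is used is the following local statement. Let $\mu\le\delta<\kappa$ and $w\in 2^\delta$, and set $B=\{\{x,y\}\mid x\in N_{w^\frown 1},\ y\in N_{w^\frown 0}\}$, an open subset of $[2^\kappa]^2$ homeomorphic to $2^\kappa$. On $B$ we have $\Delta\equiv\delta$, hence $c<\max\{\delta,\mu\}=\delta$ (as $\delta\ge\mu$), so $B$ is the union of the ${<}\kappa$-many Borel sets $B\cap\{c=i\}$ for $i<\delta$. As $B$ is not meagre in itself, some $B\cap\{c=i\}$ is non-meagre, and then, by the Baire property, $B\cap\{c=i\}$ is comeagre in some basic box $N_u\times N_v\subseteq B$ (so $u\supseteq w^\frown 1$ and $v\supseteq w^\frown 0$). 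Thus, for each such $\delta$ and $w$ we obtain $i<\delta$, $u\supseteq w^\frown 1$ and $v\supseteq w^\frown 0$ such that $(N_u\times N_v)\setminus\{c=i\}$ is a union of $\kappa$-many sets nowhere dense in $N_u\times N_v$.

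Using this, I would build by recursion on $\xi<\kappa$: a strictly increasing continuous sequence $\langle\delta_\xi\mid\xi<\kappa\rangle$, cofinal in $\kappa$, with $\delta_0\ge\mu$ and $\delta_\xi>|v_\zeta|$ for all $\zeta<\xi$; a ``spine'' $s\in 2^\kappa$ whose restriction to $\delta_\xi$ is decided by stage $\xi$ (first padding $s$ up to length $\delta_\xi$, then extending it so that $s\restriction|v_\xi|=v_\xi$); a colour $i_\xi<\delta_\xi$ and strings $u_\xi\supseteq(s\restriction\delta_\xi)^\frown 1$, $v_\xi\supseteq(s\restriction\delta_\xi)^\frown 0$ furnished by the paragraph above with $\delta=\delta_\xi$ and $w=s\restriction\delta_\xi$; and $\subseteq$-increasing (in the stage) approximations $\sigma_\beta\in 2^{<\kappa}$ to points $y_\beta:=\bigcup\sigma_\beta\in 2^\kappa$, with $\sigma_\beta\supseteq u_\beta$ from stage $\beta$ on. Since $\delta_\xi$ dominates $|v_\zeta|$ for $\zeta<\xi$, the sequence $\langle\delta_\xi\rangle$ is automatically continuous, the spine has length exactly $\delta_\xi$ at limit $\xi$ (here $\kappa$ regular is used), and, crucially, every $y_\xi$ extends $s\restriction\delta_\xi$ and hence $v_\zeta$ for each $\zeta<\xi$; consequently $\Delta(y_\zeta,y_\xi)=\delta_\zeta$ and $(y_\zeta,y_\xi)\in N_{u_\zeta}\times N_{v_\zeta}$ for all $\zeta<\xi$. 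Finally, using a standard bookkeeping of the $\kappa$-many requirements, we arrange that for all $\zeta<\xi<\kappa$ the pair $(y_\zeta,y_\xi)$ avoids every one of the $\kappa$-many nowhere dense sets whose union is $(N_{u_\zeta}\times N_{v_\zeta})\setminus\{c=i_\zeta\}$; each such requirement is discharged by extending $\sigma_\zeta$ and $\sigma_\xi$ so as to push $(y_\zeta,y_\xi)$ into a sub-box of $N_{u_\zeta}\times N_{v_\zeta}$ disjoint from the relevant set, and this does not disturb previously discharged requirements, which all have the form ``the pair lies in such-and-such open set''. It follows that $c(y_\zeta,y_\xi)=i_\zeta$ for all $\zeta<\xi<\kappa$.

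To conclude: the $y_\xi$ are pairwise distinct because $y_\xi(\delta_\xi)=1$ while $y_{\xi'}(\delta_\xi)=0$ for $\xi'>\xi$; the set $C=\{\delta_\xi\mid\xi<\kappa\}$ is a club in $\kappa$; and $\delta_\zeta\mapsto i_\zeta$ is regressive on $C$, so Fodor's lemma yields $i^*<\kappa$ and a stationary $C^*\subseteq C$ with $i_\zeta=i^*$ whenever $\delta_\zeta\in C^*$. Then $T=\{\xi<\kappa\mid\delta_\xi\in C^*\}$ is stationary (it is a preimage of a stationary set under the normal function $\xi\mapsto\delta_\xi$), and $\{y_\xi\mid\xi\in T\}$ is a $c$-monochromatic set of colour $i^*$ and cardinality $\kappa$. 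I expect the real work to lie in the recursion and its bookkeeping rather than in any single idea: one must interleave the ``initialization'' of each $y_\xi$ (choosing $\delta_\xi$, padding and extending $s$, applying the local statement, and committing $\sigma_\xi\supseteq u_\xi$) with the genericity requirements for all pairs, verify that these requirements are mutually compatible, and ensure that each $y_\xi$ ends up total — all of which is routine once one notes that every requirement only ever forces a further extension of the $\sigma$'s into an open set.
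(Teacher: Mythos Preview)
Your argument is correct, but it is considerably more elaborate than what the paper actually does. The paper avoids the two--dimensional mutual--genericity bookkeeping entirely by exploiting a simple observation: once a single point $x_\xi\in 2^\kappa$ is \emph{fixed}, the section map $y\mapsto c(x_\xi,y)$ is itself $\kappa$-Borel, so one--dimensional Baire category already yields a colour $i_\xi$ and a basic open $N_{s_\xi}$ (with $s_\xi(\nu_\xi)\neq x_\xi(\nu_\xi)$) on which the set $Y_\xi=\{y:c(x_\xi,y)=i_\xi\}$ is comeagre. The next point $x_{\xi+1}$ is then simply any element of $\bigcap_{\zeta\le\xi}Y_\zeta$, which is comeagre in $N_{s_\xi}$; at limits one takes the union of the $s_\zeta$'s and proceeds the same way. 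Thus each $x_\xi$ is chosen once and for all at stage $\xi$, no approximation sequences $\sigma_\beta$ are needed, and the entire bookkeeping of nowhere--dense requirements for \emph{pairs} disappears.

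Your route --- finding $(u_\zeta,v_\zeta,i_\zeta)$ with $\{c=i_\zeta\}$ comeagre in the rectangle $N_{u_\zeta}\times N_{v_\zeta}$ and then building all $y_\xi$ simultaneously so that every pair $(y_\zeta,y_\xi)$ is generic for the relevant rectangle --- is a faithful category--theoretic mirror of the forcing/normal--measure argument you were imitating, and it does go through: the requirements are open, they can be discharged one at a time by extending two of the $\sigma$'s, and a standard $\kappa$-length bookkeeping (using $\kappa$ regular) handles all of them while keeping every $\sigma_\beta$ in $2^{<\kappa}$ at each stage and total in the limit. But this extra machinery buys nothing here, since Borelness of $c$ already gives Borel sections. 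The moral is that when one coordinate can be frozen, do so: it collapses a two--dimensional genericity argument to a one--dimensional one and eliminates the bookkeeping.
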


\begin{proof}
  Let $\mu < \kappa$ witness that $c$ is almost $\Delta$-regressive.
  By recursion on $\xi$, we will construct sequences $(x_\xi)_{\xi < \kappa}$,
  $(\nu_\xi)_{\xi < \kappa}$, $(s_\xi)_{\xi < \kappa}$, $(Y_\xi)_{\xi < \kappa}$,
  and $(i_\xi)_{\xi < \kappa}$ such that
  \begin{itemize}
    \item for all $\xi < \kappa$, $x_\xi \in 2^\kappa$;
    \item $(\nu_\xi)_{\xi < \kappa}$ is an increasing, continuous sequence of
      infinite ordinals below $\kappa$;
    \item $(s_\xi)_{\xi < \kappa}$ is a $\subseteq$-increasing sequence of elements
      of $2^{<\kappa}$;
    \item for all $\xi < \kappa$, we have $\nu_\xi < |s_\xi| < |s_\xi| + 1 = \nu_{\xi + 1}$,
      $s_\xi \restriction \nu_\xi = x_\xi \restriction \nu_\xi$, and
      $s_\xi(\nu_\xi) \neq x_\xi(\nu_\xi)$;
    \item for all $\xi < \kappa$, $Y_\xi$ is a co-meagre subset of $N_{s_\xi}$ and,
      for all $y \in Y_\xi$, we have $c(x_\xi, y) = i_\xi$;
    \item for all $\xi < \eta < \kappa$, we have $x_\eta \in Y_\xi$.
  \end{itemize}
  Begin by letting $x_0$ be an arbitrary element of $2^\kappa$ and letting
  $\nu_0 = \mu$. Define $t_0 \in 2^{\mu + 1}$ by setting $t_0 \restriction \mu =
  x_0 \restriction \mu$ and $t_0(\mu) = 1 - x_0(\mu)$. By the
  Baire Category Theorem applied to $N_{t_0}$, we can find $i_0 < \kappa$
  such that $Y^*_0 := \{y \in N_{t_0} \mid c(x_0, y) = i_0\}$ is non-meagre.
  Since $c$ is Borel, $Y^*_0$ is a Borel subset of $2^\kappa$. Since Borel sets
  have the Baire property and $Y^*_0$ is a non-meagre Borel set, there is
  $s_0 \in 2^{<\kappa}$ such that $Y^*_0$ is co-meagre in $N_{s_0}$. Note
  that $s_0$ extends $t_0$, so $s_0 \restriction \nu_0 = x_0 \restriction \nu_0$ and
  $s_0(\nu_0) \neq x_0(\nu_0)$. Set $Y_0 = Y^*_0 \cap N_{s_0}$.

  Suppose next that $\eta < \kappa$ and that $(x_\xi)_{\xi \leq \eta}$, $(\nu_\xi)_{\xi
  \leq \eta}$, $(s_\xi)_{\xi \leq \eta}$, $(Y_\xi)_{\xi \leq \eta}$, and $(i_\xi)_{\xi \leq \eta}$ have
  been constructed, satisfying the requirements listed above. For
  all $\xi \leq \eta$, $Y_\xi$ is co-meagre in $N_{s_\eta}$, and hence
  $\bigcap_{\xi \leq \eta} Y_\xi$ is co-meagre in $N_{s_\eta}$. Let $x_{\eta + 1}$
  be an arbitrary element of $\bigcap_{\xi \leq \eta} Y_\xi$. (Note that
  $x_{\eta + 1} \in N_{s_\eta}$, since $Y_\eta \subseteq N_{s_\eta}$.) Let
  $\nu_{\eta + 1} = |s_\eta| + 1$, and define $t_\eta \in 2^{\nu_{\eta + 1}}$
  by letting $t_\eta \restriction |s_\eta| = s_\eta$ and $t_\eta(|s_\eta|) =
  1 - x_{\eta + 1}(|s_\eta|)$. Apply the Baire Category Theorem to $N_{t_\eta}$
  to find $i_{\eta + 1} < \kappa$ such that $Y^*_{\eta + 1} := \{y \in N_{t_\eta}
  \mid c(x_{\eta + 1}, y) = i_{\eta + 1}\}$ is non-meagre. Fix an $s_{\eta + 1}$
  such that $Y^*_{\eta + 1}$ is co-meagre in $N_{s_{\eta + 1}}$, and set
  $Y_{\eta + 1} = Y^*_{\eta + 1} \cap N_{s_{\eta + 1}}$.

  Finally, suppose that $\eta < \kappa$ is a limit ordinal and that $(x_\xi)_{\xi < \eta}$, $(\nu_\xi)_{\xi
  < \eta}$, $(s_\xi)_{\xi < \eta}$, $(Y_\xi)_{\xi < \eta}$, and $(i_\xi)_{\xi < \eta}$ have
  been constructed. Let $\nu_\eta = \sup\{\nu_\xi \mid \xi < \eta\}$, let $t^*_\eta = \bigcup_{\xi < \eta}
  s_\eta$, and note that $t^*_\eta \in 2^{\nu_\eta}$. For all $\xi < \eta$,
  $Y_\xi$ is co-meagre in $N_{t^*_\eta}$, so we can let $x_\eta$ be an arbitrary
  element of $N_{t^*_\eta} \cap \bigcap_{\xi < \eta} Y_\xi$. Define $t_\eta \in 2^{\nu_\eta + 1}$
  by letting $t_\eta \restriction \nu_\eta = t^*_\eta$ and $t_\eta(\nu_\eta) =
  1 - x_\eta(\nu_\eta)$. Now proceed exactly as in the previous case to define
  $i_\eta$, $s_\eta$, and $Y_\eta$. This concludes the construction.

  The point of our construction was to arrange so that, for all $\xi < \eta < \kappa$,
  we have $\Delta(x_\xi, x_\eta) = \nu_\xi$ and $c(x_\xi, x_\eta) = i_\xi$.
  Since $c$ is almost $\Delta$-regressive, it follows that the mapping
  $\nu_\xi \mapsto i_\xi$ is regressive, so, since $\{\nu_\xi \mid \xi < \kappa\}$
  is a club in $\kappa$ and hence stationary, we can apply the pressing-down lemma
  to find an unbounded $I \subseteq \kappa$ and a fixed $i < \kappa$ such that
  $i_\xi = i$ for all $\xi \in I$. In turn, $\{x_\xi \mid \xi \in I\}$ is a
  $c$-monochromatic set of size $\kappa$.
\end{proof}

Note that the above argument can be used to generate a monochromatic set that
has lexicographic order type $\kappa+1$.
We do not know how far this can be generalized, even for continuous almost
$\Delta$-regressive colourings.

\section{Maximal odd-cycle and triangle-free colourings}
\label{maximal_colouring_section}

\begin{dfn}
    Suppose that $X$ is a set and $\kappa$ is a cardinal. A colouring
    $c:[X]^2 \rightarrow \kappa$ is a \emph{maximal triangle-free colouring
    into $\kappa$} if
    \begin{enumerate}
        \item $c$ has no monochromatic triangles; and
        \item for any proper superset $Y \supsetneq X$ and any extension
        of $c$ to $c':[Y]^2 \rightarrow \kappa$, $c'$ does have a monochromatic
        triangle.
    \end{enumerate}
    The definition generalizes in the obvious way to \emph{$k$-cycle free},
    \emph{odd-cycle-free}, etc.
\end{dfn}

The following proposition is immediate.

\begin{prop} \label{triangle_free_equiv_prop}
  Suppose that $c:[X]^2 \rightarrow \kappa$ is a triangle-free colouring. The
  following statements are equivalent.
  \begin{enumerate}
      \item $c$ is a maximal triangle-free colouring.
      \item For every function $d:X \rightarrow \kappa$, there are distinct
      $x,y \in X$ such that $d(x) = d(y) = c(x,y)$.
  \end{enumerate}
\end{prop}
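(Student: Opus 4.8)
The plan is to prove the equivalence by unravelling the definition of maximality. The key observation is that extending $c$ to a colouring $c'$ on $[X \cup \{y\}]^2$ for some new point $y \notin X$ amounts to choosing, for each $x \in X$, a value $c'(x,y) \in \kappa$; this is exactly the data of a function $d : X \to \kappa$, via $d(x) = c'(x,y)$. Since $c$ is already triangle-free, the only monochromatic triangles $c'$ can create must involve $y$, and such a triangle $\{x, x', y\}$ exists precisely when $c(x,x') = d(x) = d(x')$, i.e. when the colouring $c'$ fails to be triangle-free. So the plan is to make this correspondence precise in both directions.

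First I would prove $(1) \Rightarrow (2)$ by contraposition: suppose there is a function $d : X \to \kappa$ such that no distinct $x, y \in X$ satisfy $d(x) = d(y) = c(x,y)$. Pick any $y^* \notin X$ and define $c' : [X \cup \{y^*\}]^2 \to \kappa$ by $c' \restriction [X]^2 = c$ and $c'(x, y^*) = d(x)$ for $x \in X$. I claim $c'$ is triangle-free. Any monochromatic triple in $c'$ not containing $y^*$ would be a monochromatic triple for $c$, of which there are none; and a monochromatic triple $\{x, x', y^*\}$ would force $c(x,x') = c'(x, y^*) = c'(x', y^*)$, that is $c(x,x') = d(x) = d(x')$, contradicting the choice of $d$. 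Hence $c'$ witnesses that $c$ is not maximal triangle-free.

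For $(2) \Rightarrow (1)$, again by contraposition: suppose $c$ is not maximal, so there is a proper superset $Y \supsetneq X$ and a triangle-free extension $c' : [Y]^2 \to \kappa$ of $c$. Fix any $y^* \in Y \setminus X$ and define $d : X \to \kappa$ by $d(x) = c'(x, y^*)$. If there were distinct $x, x' \in X$ with $d(x) = d(x') = c(x, x')$, then $\{x, x', y^*\}$ would be a $c'$-monochromatic triangle (using $c'(x,x') = c(x,x')$ since $c'$ extends $c$), contradicting triangle-freeness of $c'$. Hence $d$ witnesses the failure of $(2)$.

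I do not expect any serious obstacle here — the argument is essentially a direct translation between the two formulations. The only point requiring a little care is the direction $(2) \Rightarrow (1)$, where $Y \setminus X$ may contain more than one point: one must note that it suffices to restrict attention to a single new point $y^*$, since a triangle-free extension to all of $Y$ restricts to a triangle-free extension to $X \cup \{y^*\}$, and that extension still extends $c$. Beyond that, the proof is just the bookkeeping of checking that the only new monochromatic triples are those through the adjoined vertex, which is immediate from $c$ itself being triangle-free.
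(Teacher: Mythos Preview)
Your proof is correct and is exactly the natural argument: the paper itself does not give a proof of this proposition, merely noting that it ``is immediate,'' and your write-up is the straightforward unpacking of the definition that the authors had in mind.
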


By the Erd\H{o}s-Rado theorem, for infinite $\kappa$,
if $c:[X]^2 \rightarrow \kappa$ is
triangle-free (or odd-cycle-free, $k$-cycle-free, etc.), then it must be
the case that $|X| \leq 2^\kappa$. Therefore, there must be maximal
triangle-free (or odd-cycle-free, etc.) colourings into $\kappa$ of size
at most $2^\kappa$; it turns out we have already seen an example of
such a colouring of size exactly $2^\kappa$.

For any infinite cardinal $\kappa$, the colouring $\Delta_\kappa:[2^\kappa]^2
\rightarrow \kappa$ is an odd-cycle-free colouring. In fact, it provides an
example of a \emph{maximal} odd-cycle free colouring and, indeed, a maximal
$k$-cycle free colouring for each odd $k \geq 3$. We provide a proof of this fact
for $k = 3$; an easy modification will work for odd $k > 3$, and we leave this
to the reader.

\begin{prop}
  Suppose that $\kappa$ is an infinite cardinal. Then $\Delta_\kappa$ is a
  maximal triangle-free colouring into $\kappa$.
\end{prop}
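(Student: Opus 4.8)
The plan is to use the characterization of maximal triangle-free colourings from Proposition~\ref{triangle_free_equiv_prop}. Since $\Delta_\kappa$ is already known to be triangle-free, it suffices to verify condition (2): given any function $d:2^\kappa \rightarrow \kappa$, we must produce distinct $x,y \in 2^\kappa$ with $d(x) = d(y) = \Delta(x,y)$. So the entire task reduces to this combinatorial statement about functions on $2^\kappa$.

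First I would unpack what the target equation $d(x)=d(y)=\Delta(x,y)$ demands. If we want $\Delta(x,y) = \alpha$ for a prescribed $\alpha < \kappa$, we need $x$ and $y$ to agree below $\alpha$ and disagree at $\alpha$. So the strategy is: find an ordinal $\alpha < \kappa$ and a stem $s \in 2^\alpha$ such that the set $\{x \in 2^\kappa : x \restriction \alpha = s,\ d(x) = \alpha\}$ contains two elements that also disagree at coordinate $\alpha$ — equivalently, such that among the $x$ extending $s$ with $d(x)=\alpha$, both values of $x(\alpha)$ occur (or even just that there are at least two such $x$, then split on coordinate $\alpha$). Actually the cleanest route: for each $\alpha$ and each $s \in 2^{\alpha}$, let $A_{s} = \{x \in 2^\kappa : s \subseteq x,\ d(x) = \alpha\}$; if for some $\alpha$ and some $s \in 2^{\alpha}$ we have that $A_s$ meets both $N_{s^\frown 0}$ and $N_{s^\frown 1}$, we are done by picking one $x$ from each.

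The heart of the argument is a counting/pigeonhole step showing such $\alpha, s$ must exist. Suppose not. Then for every $\alpha < \kappa$ and every $s \in 2^\alpha$, all elements $x$ with $x \restriction \alpha = s$ and $d(x) = \alpha$ agree on coordinate $\alpha$ — call that common value $e(s) \in 2$ (this is vacuous if no such $x$ exists). Build recursively a branch: start with the empty stem; at stage $\alpha$, having built $s \in 2^\alpha$, extend by $s^\frown(1 - e(s))$ if that is meaningful, taking unions at limits. This produces some $x^* \in 2^\kappa$ constructed so that at \emph{every} stage $\alpha$, $x^*(\alpha) \neq e(x^* \restriction \alpha)$. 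But $d(x^*) = \alpha^*$ for some $\alpha^* < \kappa$, and then $x^* \restriction \alpha^* = s^*$ with $d(x^*) = \alpha^*$, forcing $x^*(\alpha^*) = e(s^*)$ — contradicting the construction. Hence the desired $\alpha, s$ exist, and Proposition~\ref{triangle_free_equiv_prop} finishes the proof.

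The step I expect to be the main obstacle — or at least the one needing the most care — is making the recursive construction of $x^*$ robust: I have to ensure at limit stages that the union of the stems built so far is again a function into $2$ of the right length (automatic, since the stems are $\subseteq$-increasing), and I must double-check that the definition of $e(s)$ is genuinely well-defined under the negation of the conclusion (it is: if $A_s$ met both $N_{s^\frown 0}$ and $N_{s^\frown 1}$ we'd already be done, so under the assumption $A_s$ lies entirely in one of them, pinning down $e(s)$ — and when $A_s = \emptyset$ we just set $e(s)$ arbitrarily). Once the diagonalization is set up correctly, the contradiction is immediate. A cosmetic remark: one should note this argument uses nothing about $\kappa$ being uncountable, matching the statement's hypothesis of merely infinite $\kappa$, and an analogous diagonal branch argument handles the $k$-cycle-free case for odd $k > 3$ as the authors indicate.
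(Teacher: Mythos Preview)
Your proposal is correct and follows essentially the same approach as the paper: both reduce via Proposition~\ref{triangle_free_equiv_prop} to finding $x,y$ with $d(x)=d(y)=\Delta(x,y)$, assume no such pair exists, and then diagonalize to build a single element $z$ (your $x^*$) which at each level $\alpha$ avoids the unique ``bad'' bit---the paper's choice of $z(\alpha)$ is exactly your $1-e(z\restriction\alpha)$---yielding a contradiction at $\alpha=d(z)$. The only difference is cosmetic: you package the argument with the sets $A_s$ and the function $e$, while the paper argues the uniqueness of the bad bit inline.
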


\begin{proof}
    Suppose not. Then, by Proposition \ref{triangle_free_equiv_prop},
    there is a function $d:2^\kappa \rightarrow \kappa$ such that, for
    all distinct $x,y \in 2^\kappa$, it is not the case that
    $d(x) = d(y) = \Delta(x,y)$.

    We will now construct an element $z \in 2^\kappa$ such that, for all
    $\alpha < \kappa$, there is no $x \in 2^\kappa$ such that
    $x \restriction (\alpha + 1) = z \restriction (\alpha + 1)$ and
    $d(x) = \alpha$. This will immediately result in a contradiction, because
    if $\alpha = d(z)$, then we clearly have $z \restriction (\alpha + 1)
    = z \restriction (\alpha + 1)$ and $d(z) = \alpha$.

    We will construct $z$ by specifying $z \restriction \alpha$ by recursion
    on $\alpha < \kappa$. To this end, fix $\alpha < \kappa$ and suppose
    that we have constructed $z \restriction \alpha$. We claim that there is
    at most one $i < 2$ for which there exists $x \in 2^\kappa$ such that
    $x \restriction (\alpha + 1) = (z \restriction \alpha) ^\frown \langle i \rangle$ and $d(x) = \alpha$.
    Indeed, otherwise there would be $x_0, x_1 \in 2^\kappa$ such that
    \begin{itemize}
        \item $x_0 \restriction \alpha = x_1 \restriction \alpha =
        z \restriction \alpha$;
        \item $x_0(\alpha) = 0$ and $x_1(\alpha) = 1$;
        \item $d(x_0) = d(x_1) = \alpha$.
    \end{itemize}
    But, in this case, we would have $\Delta(x_0, x_1) = \alpha =
    d(x_0) = d(x_1)$, contradicting our assumptions about $d$. Therefore,
    we can choose $i < 2$ such that there is no $x \in 2^\kappa$ with
    $x \restriction (\alpha + 1) = (z \restriction \alpha)^\frown \langle i
    \rangle$ and $d(x) = \alpha$, and then set $z(\alpha) = i$. This completes the construction of
    $z$ and thus the proof of the proposition.
\end{proof}

A natural question to ask now is the following: If $c:[X]^2 \rightarrow \kappa$
is a maximal triangle-free (or odd-cycle-free, etc.) colouring into $\kappa$,
must it be the case that $|X| = 2^\kappa$? For the case of odd-cycle-free colourings,
we have an affirmative answer. To help us prove this, let us introduce the following
notion.

\begin{dfn}
    Suppose that $X \subseteq 2^\kappa$ and $c:[X]^2 \rightarrow \kappa$.
    We say that $c$ is a $\delta$-colouring if, for all distinct $x,y \in X$,
    we have $x(c(x,y)) \neq y(c(x,y))$.
\end{dfn}

Notice the following relevant facts about $\delta$-colourings, which are
easily verified:
\begin{itemize}
    \item All $\delta$-colourings are odd-cycle-free.
    \item If $c:[X]^2 \rightarrow \kappa$ is a $\delta$-colouring, then
    $c$ can be extended to a $\delta$-colouring $c':[2^\kappa]^2 \rightarrow
    \kappa$ by letting $c'(x,y) = \Delta(x,y)$ for all
    $\{x,y\} \in [2^\kappa]^2 \setminus [X]^2$.
\end{itemize}

\begin{prop} \label{delta_colouring_prop}
  Suppose that $X$ is a set and $c:[X]^2 \rightarrow \kappa$ is odd-cycle-free.
  Then $c$ is isomorphic to a $\delta$-colouring. In other words, there is
  an injective function $\iota:X \rightarrow 2^\kappa$ such that that the
  function $c':[\iota``X]^2 \rightarrow \kappa$ defined by
  $c'(\iota(x), \iota(y)) = c(x, y)$ is a $\delta$-colouring.
\end{prop}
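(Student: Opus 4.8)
The plan is to build the embedding $\iota : X \to 2^\kappa$ one coordinate at a time, using the fact that each colour class $c^{-1}(\{\xi\})$ is a bipartite graph. For each $\xi < \kappa$, since $c^{-1}(\{\xi\})$ is odd-cycle-free it is bipartite, so we may fix a partition $X = A^0_\xi \,\dot\cup\, A^1_\xi$ such that no edge of colour $\xi$ lies inside $A^0_\xi$ or inside $A^1_\xi$. Then define $\iota(x)(\xi) = i$ whenever $x \in A^i_\xi$. This is exactly the construction used in the proof of Theorem~\ref{almost_regressive_odd_cycle_thm}, and the same one-line verification shows that if $c(x,y) = \xi$ then $x \in A^i_\xi$ and $y \in A^{1-i}_\xi$ for some $i < 2$, so $\iota(x)(\xi) \neq \iota(y)(\xi)$. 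In particular $\iota$ is injective (distinct $x,y$ get some colour $\xi = c(x,y)$ at which they disagree), and, setting $c'(\iota(x),\iota(y)) = c(x,y)$ on $[\iota``X]^2$, we have $\iota(x)(c'(\iota(x),\iota(y))) \neq \iota(y)(c'(\iota(x),\iota(y)))$, i.e. $c'$ is a $\delta$-colouring.

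The one point that requires a word of care is that $c'$ is well-defined as a function on $[\iota``X]^2$: this needs $\iota$ to be injective, which we have already noted, so there is no ambiguity in reading off $c(x,y)$ from the pair $\{\iota(x),\iota(y)\}$. Everything else is immediate from the defining property of the partitions.

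I do not anticipate a genuine obstacle here; the proposition is essentially a repackaging of the bipartiteness observation already exploited in Section~\ref{delta-regressive_section}. The only thing to be mildly careful about is the choice of the bipartitions $A^i_\xi$: bipartiteness of $c^{-1}(\{\xi\})$ as a graph on $X$ only guarantees a proper $2$-colouring on each connected component, and we must make an arbitrary choice of which side is ``$0$'' on each component (and put the isolated vertices wherever we like, say in $A^0_\xi$); none of this interacts across different values of $\xi$, so the choices can be made independently. With the partitions fixed, the verification is the short computation above.
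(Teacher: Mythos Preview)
Your proof is correct and follows essentially the same approach as the paper's own proof: for each colour $\xi$, fix a bipartition of $X$ witnessing that the colour class $c^{-1}(\{\xi\})$ is bipartite, define $\iota(x)(\xi)$ to record which side $x$ lies on, and then verify injectivity and the $\delta$-colouring property exactly as you describe. The paper does not include your remarks on well-definedness of $c'$ or the freedom in choosing bipartitions, but these are minor elaborations on the same argument rather than a different route.
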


\begin{proof}
    The fact that $c$ is odd-cycle-free is equivalent to the assertion that,
    for each $\alpha < \kappa$, the graph $G_\alpha = (X, c^{-1}(\{\alpha\}))$
    is bipartite. Therefore, for each $\alpha < \kappa$, we can partition
    $X$ into two sets $X = X^\alpha_0 \dot{\cup} X^\alpha_1$ such that
    for all $i < 2$ and distinct $x, y \in X^\alpha_i$, we have
    $c(x,y) \neq \alpha$. For each $x \in X$ and $\alpha < \kappa$,
    let $i_\alpha(x)$ be the unique $i < 2$ such that $x \in X^\alpha_i$,
    and define $\iota(x) \in 2^\kappa$ by letting $\iota(x)(\alpha) = i_\alpha(x)$
    for all $\alpha < \kappa$.

    \begin{clm}
        $\iota$ is injective.
    \end{clm}

    \begin{proof}
        Fix distinct $x,y \in X$, and let $\alpha = c(x,y)$. Then it must
        be the case that $i_\alpha(x) \neq i_\alpha(y)$, so $\iota(x) \neq
        \iota(y)$.
    \end{proof}

    To finish the proof, it suffices to show that the colouring $c'$ in the
    statement of the proposition is a $\delta$-colouring. To see this,
    fix distinct $x, y \in X$ and let $\alpha = c(x,y) = c'(\iota(x), \iota(y))$.
    Then, by construction, $i_\alpha(x) \neq i_\alpha(y)$, so
    $\iota(x)(\alpha) \neq \iota(y)(\alpha)$, so $c'$ is in fact a
    $\delta$-colouring.
\end{proof}

\begin{cor}
    If $c:[X]^2 \rightarrow \kappa$ is a maximal odd-cycle-free colouring into
    $\kappa$, then $|X| = 2^\kappa$.
\end{cor}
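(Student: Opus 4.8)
The plan is to combine Proposition~\ref{delta_colouring_prop} with the maximality argument already carried out for $\Delta_\kappa$. Suppose $c:[X]^2\to\kappa$ is a maximal odd-cycle-free colouring into $\kappa$. First I would invoke Proposition~\ref{delta_colouring_prop} to replace $c$ by an isomorphic $\delta$-colouring: fix an injection $\iota:X\to 2^\kappa$ so that $c'(\iota(x),\iota(y))=c(x,y)$ defines a $\delta$-colouring on $\iota``X$. Since maximality is an isomorphism-invariant property, $c'$ is a maximal odd-cycle-free colouring on $\iota``X$, and it suffices to show $|\iota``X|=2^\kappa$. So, renaming, I may assume $X\subseteq 2^\kappa$ and $c$ is itself a $\delta$-colouring, i.e.\ $x(c(x,y))\neq y(c(x,y))$ for all distinct $x,y\in X$.

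Next I would use the second bullet point observed about $\delta$-colourings: any $\delta$-colouring $c:[X]^2\to\kappa$ extends to the $\delta$-colouring $c'':[2^\kappa]^2\to\kappa$ given by $c''(x,y)=\Delta(x,y)$ off $[X]^2$, and this $c''$ is odd-cycle-free (being a $\delta$-colouring). Now if $X\subsetneq 2^\kappa$, then $c''$ is an odd-cycle-free colouring on $[2^\kappa]^2$ properly extending $c$, which already contradicts maximality of $c$ — \emph{provided} we allow extending to supersets that lie inside $2^\kappa$. The definition of maximality as stated, however, quantifies over \emph{all} proper supersets $Y\supsetneq X$ and extensions $c':[Y]^2\to\kappa$, so this is even easier: we need only produce one such $Y$ and one odd-cycle-free $c'$. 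Taking $Y=X\cup\{y^*\}$ for any $y^*\in 2^\kappa\setminus X$ and $c'=c''\restriction[Y]^2$ does the job. Hence $X=2^\kappa$, as claimed.

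The one point requiring a small amount of care — and the closest thing to an obstacle — is the reduction via $\iota$: I should check that maximality genuinely transfers along the isomorphism. This is straightforward: if $d':[\iota``X\cup\{z\}]^2\to\kappa$ were an odd-cycle-free extension of $c'$ with $z\notin\iota``X$, pull it back along the bijection $\iota\cup\{(x^*,z)\}$ for a fresh point $x^*\notin X$ to get an odd-cycle-free extension of $c$ on $X\cup\{x^*\}$, contradicting maximality of $c$. Thus $c'$ is maximal, and since every $\delta$-colouring on a proper subset of $2^\kappa$ fails to be maximal by the previous paragraph, we conclude $\iota``X=2^\kappa$ and therefore $|X|=2^\kappa$.
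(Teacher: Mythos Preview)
Your proof is correct and follows essentially the same approach as the paper: apply Proposition~\ref{delta_colouring_prop} to realize $c$ as a $\delta$-colouring on a subset of $2^\kappa$, then extend via $\Delta$ to all of $2^\kappa$, and use maximality to derive a contradiction if the subset is proper. The only cosmetic difference is that you transfer maximality to $c'$ first and argue that $c'$ extends, whereas the paper extends $c'$ first and then pulls the extension back to a proper extension of $c$; these are equivalent formulations of the same argument.
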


\begin{proof}
    By Erd\H{o}s-Rado, we know that $|X| \leq 2^\kappa$. For the other
    inequality, apply Proposition \ref{delta_colouring_prop} to find an
    injection $\iota:X \rightarrow 2^\kappa$ such that the colouring
    $c':[\iota``X]^2 \rightarrow \kappa$ defined by $c'(\iota(x), \iota(y)) =
    c(x,y)$ is a $\delta$-colouring. If $|X| < 2^\kappa$, then $\iota``X$
    is a proper subset of $2^\kappa$, so $c'$ can be properly extended to
    a $\delta$-colouring $d':[2^\kappa]^2 \rightarrow \kappa$. But then
    $d'$ easily induces a proper extension of $c$ to an odd-cycle-free
    colouring $c':[X \cup (2^\kappa \setminus \iota``X)]^2 \rightarrow \kappa$,
    contradicting the fact that $c$ is a maximal odd-cycle-free colouring into
    $\kappa$.
\end{proof}

The analogous question about maximal triangle-free colourings remains open.
The simplest case of this question asks whether there is always a
maximal triangle-free colouring $c:[\omega_1]^2 \rightarrow \omega$, or
even whether there is consistently such a colouring in a model of $\neg \mathrm{CH}$.
One way to ensure that a colouring $c:[\omega_1]^2 \rightarrow \omega$
is triangle-free is to require that all of the fibers
$c(\cdot, \beta)$ be one-to-one. Such, colourings, however, necessarily
fail to be maximal.

\begin{prop} \label{one_to_one_prop}
  Suppose that $c:[\omega_1]^2 \rightarrow \omega$ has the property
  that, for all $\beta < \omega_1$, the map $c(\cdot, \beta):\beta
  \rightarrow \omega$ is one-to-one. Then $c$ is not a maximal
  triangle-free colouring.
\end{prop}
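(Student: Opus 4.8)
The plan is to invoke Proposition~\ref{triangle_free_equiv_prop}. First note that $c$ is itself triangle-free: if $\alpha<\beta<\gamma$ formed a monochromatic triangle, then $c(\alpha,\gamma)=c(\beta,\gamma)$ would contradict the injectivity of $c(\cdot,\gamma)$ on $\gamma$. Hence, by Proposition~\ref{triangle_free_equiv_prop}, it suffices to produce a single function $d:\omega_1\to\omega$ such that no two distinct $\alpha,\beta$ satisfy $d(\alpha)=d(\beta)=c(\alpha,\beta)$; any such $d$ witnesses that $c$ is not a maximal triangle-free colouring.

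I would build $d$ by recursion on $\beta<\omega_1$. Having defined $d\restriction\beta$, put $F_\beta=\{n<\omega : \text{there is } \alpha<\beta \text{ with } c(\alpha,\beta)=n=d(\alpha)\}$; this is precisely the set of values for $d(\beta)$ that would create a bad pair $\{\alpha,\beta\}$ with $\alpha<\beta$. The construction is to choose $d(\beta)$ outside $F_\beta$ at every stage, and the only subtlety is to do this so the recursion never gets stuck, i.e.\ so that $F_\beta\neq\omega$ always. The trick is to spoil the injectivity of $d$ immediately: choose $d(0)$ to be any value other than $c(0,1)$ (which forces $F_1=\emptyset$), set $d(1)=d(0)$, and for $\beta\geq 2$ let $d(\beta)$ be an arbitrary element of $\omega\setminus F_\beta$. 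Granting that this works, $d$ is as required: if $\alpha<\beta$ and $d(\alpha)=d(\beta)=c(\alpha,\beta)=n$, then $n\in F_\beta$, contradicting $d(\beta)\notin F_\beta$ (the case $\beta=1$ being handled by the choice of $d(0)$).

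The heart of the matter, and where I expect the only real work, is verifying $F_\beta\neq\omega$ for every $\beta$. For finite $\beta$ this is immediate, since $|F_\beta|\leq|\beta|<\omega$. For infinite $\beta$, the point is that $F_\beta=\omega$ would force $d\restriction\beta=c(\cdot,\beta)$: for each $n$, the injectivity of $c(\cdot,\beta)$ makes the witnessing $\alpha$ with $c(\alpha,\beta)=n=d(\alpha)$ unique, and since every $\alpha<\beta$ arises as the witness for $n=c(\alpha,\beta)$, we get $d(\alpha)=c(\alpha,\beta)$ for all $\alpha<\beta$. But $c(\cdot,\beta)$ is one-to-one while $d\restriction\beta$ is not (we arranged $d(0)=d(1)$), so this is impossible; hence $F_\beta\subsetneq\omega$ and the recursion goes through. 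It is worth noting that the naive greedy recursion, taken with no precaution, really can get stuck: there are such colourings for which always setting $d(\beta)=\min(\omega\setminus F_\beta)$ drives $d$ into agreeing with $c(\cdot,\omega)$ on all of $\omega$, whence $F_\omega=\omega$. So deliberately breaking the injectivity of $d$ on the first $\omega$ coordinates is genuinely what makes the argument run.
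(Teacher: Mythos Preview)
Your proof is correct and shares the paper's key insight: deliberately set $d(0)=d(1)\neq c(0,1)$ so that $d$ is not injective on $\{0,1\}$, which rules out $d\restriction\beta$ ever coinciding with the injective fibre $c(\cdot,\beta)$. The execution differs slightly. The paper avoids recursion entirely by making an explicit choice: for each $\alpha\geq 2$ it picks $\epsilon_\alpha\in\{0,1\}$ with $c(\epsilon_\alpha,\alpha)\neq i$ and sets $d(\alpha)=c(\epsilon_\alpha,\alpha)$. Then for $2\leq\alpha<\beta$ one has $d(\beta)=c(\epsilon_\beta,\beta)\neq c(\alpha,\beta)$ directly by injectivity of $c(\cdot,\beta)$ (since $\epsilon_\beta<2\leq\alpha$), so the bad-pair condition fails without ever computing a forbidden set $F_\beta$. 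Your recursive framing is a bit more roundabout---you have to argue that $F_\beta\neq\omega$---but the argument you give is sound, and your closing remark about why the naive greedy recursion can fail is a nice observation that the paper does not make explicit.
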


\begin{proof}
    Suppose that $c$ is triangle-free.
    To show that $c$ is not maximal, it suffices to construct a
    function $d:\omega_1 \rightarrow \omega$ such that, for all
    $\alpha < \beta < \omega_1$, it is not the case that $d(\alpha)
    = d(\beta) = c(\alpha, \beta)$. To begin, fix a colour $i < \omega$
    such that $i \neq c(0, 1)$, and let $d(0) = d(1) = i$. Next,
    for each $1 < \alpha < \omega_1$, there must be $\epsilon_\alpha
    < 2$ such that $c(\epsilon_\alpha, \alpha) \neq i$. Let $d(\alpha)
    = c(\epsilon_\alpha, \alpha)$.

    To verify that $d$ is as desired, fix $\alpha < \beta < \omega_1$.
    If $\alpha = 0$ and $\beta = 1$, then $d(\alpha) = d(\beta) = i$
    and $c(\alpha, \beta) \neq i$, so the requirement is
    satisfied. If $\alpha < 2$ and $\beta \geq 2$, then $d(\alpha) = i$
    and $d(\beta) \neq i$, so again the requirement is satisfied.
    Finally, if $2 \leq \alpha$, then $d(\beta) = c(\epsilon_\beta,
    \beta) \neq c(\alpha, \beta)$, since $c(\cdot, \beta)$ is
    injective, so the requirement is satisfied once again.
\end{proof}

At this point, it is unclear whether Proposition \ref{one_to_one_prop}
can be strengthened to apply to maps with finite-to-one fibers. We do,
however, have the following result.

\begin{thm} \label{finite-to-one_forcing_thm}
  Suppose that $c : [\omega_1]^2 \rightarrow \omega$ has the property that,
  for all $\beta < \omega_1$, the map $c(\cdot, \beta):\beta \rightarrow \omega$
  is finite-to-one. Then there is a ccc forcing notion $\bb{P}$ such that
  \[
    \Vdash_{\bb{P}} ``\check{c} \text{ is not a maximal triangle-free colouring}".
  \]
\end{thm}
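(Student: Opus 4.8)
The plan is to force with finite approximations to a function $d:\omega_1\to\omega$ of the kind produced by Proposition \ref{triangle_free_equiv_prop}. Let $\bb{P}$ be the set of all finite partial functions $p:\omega_1\to\omega$ such that, for all $\alpha<\beta$ in $\dom(p)$, it is \emph{not} the case that $p(\alpha)=p(\beta)=c(\alpha,\beta)$, ordered by reverse inclusion. I will show (i) that for each $\alpha<\omega_1$ the set $D_\alpha=\{p\in\bb{P}:\alpha\in\dom(p)\}$ is dense, and (ii) that $\bb{P}$ is ccc. Granting these, if $G$ is $\bb{P}$-generic then $d=\bigcup G$ is a total function $\omega_1\to\omega$, and for any distinct $\alpha,\beta$, any $p\in G$ with $\alpha,\beta\in\dom(p)$ witnesses $\neg(d(\alpha)=d(\beta)=c(\alpha,\beta))$. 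Since $\bb{P}$ is ccc it preserves $\omega_1$ and adds no new two-element subsets of $\omega_1$, so $c$ is literally the same triangle-free colouring in $V^{\bb{P}}$; Proposition \ref{triangle_free_equiv_prop}, applied in $V^{\bb{P}}$, then shows $c$ is not maximal there, which is exactly the desired forcing statement.

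Density is easy and will not use the hypothesis on $c$: given $p\in\bb{P}$ and $\alpha\notin\dom(p)$, a value $n\in\omega$ for $p(\alpha)$ produces a condition provided $n$ avoids the set $\{p(\beta):\beta\in\dom(p)\text{ and }p(\beta)=c(\alpha,\beta)\}$, which has size at most $|\dom(p)|<\omega$, so such an $n$ exists.

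The substance is the ccc argument, and this is where the finite-to-one hypothesis enters. Suppose towards a contradiction that $\{p_\xi:\xi<\omega_1\}$ is an antichain. Applying the $\Delta$-system lemma together with standard further thinnings — using that there are only countably many functions from the finite root to $\omega$, and that an uncountable family of pairwise disjoint finite subsets of $\omega_1$ has an uncountable ``strongly increasing'' subfamily — I may assume that $\{\dom(p_\xi):\xi<\omega_1\}$ is a $\Delta$-system with root $r$, that $p_\xi\restriction r$ is a fixed function for all $\xi$, and that, writing $E_\xi=\dom(p_\xi)\setminus r$, we have $\max(E_\xi)<\min(E_{\xi'})$ whenever $\xi<\xi'$. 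For each $\xi$ set
\[
  F_\xi=\bigcup_{\beta\in E_\xi}\{\alpha<\beta:c(\alpha,\beta)=p_\xi(\beta)\}.
\]
Since each $c(\cdot,\beta):\beta\to\omega$ is finite-to-one, every set in this finite union is finite, so $F_\xi$ is finite. Now fix any infinite $\xi'<\omega_1$. The sets $E_\xi$ for $\xi<\xi'$ are pairwise disjoint and infinite in number, while $F_{\xi'}$ is finite, so there is some $\xi<\xi'$ with $E_\xi\cap F_{\xi'}=\emptyset$. Then $p_\xi\cup p_{\xi'}$ is a function (the domains agree on $r$ and are otherwise disjoint) and it lies in $\bb{P}$: pairs contained in $\dom(p_\xi)$ or in $\dom(p_{\xi'})$ inherit the requirement, and for a ``cross'' pair with $\alpha\in E_\xi$ and $\beta\in E_{\xi'}$ we have $\alpha<\beta$ by strong increasingness, while $c(\alpha,\beta)=p_{\xi'}(\beta)$ would force $\alpha\in F_{\xi'}$, contradicting $\alpha\in E_\xi$. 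Hence $p_\xi$ and $p_{\xi'}$ are compatible, contradicting that $\{p_\xi:\xi<\omega_1\}$ is an antichain.

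The only real obstacle is recognizing how the finite-to-one property routes through the $\Delta$-system setup: once the non-root parts of the domains are strongly increasing, the ground-model coordinates that are ``dangerous for $p_{\xi'}$'' collapse to the finite set $F_{\xi'}$, after which pairwise disjointness of the lower domains and ordinary pigeonhole finish the argument. Everything else — density, the absoluteness remarks, and the routine thinnings feeding the $\Delta$-system lemma — is standard.
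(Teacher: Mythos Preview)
Your proof is correct and follows essentially the same approach as the paper: the same forcing of finite approximations to a witness $d$, the same density argument, and the same $\Delta$-system reduction for the ccc. The only cosmetic difference is in the final pigeonhole: you precompute the finite ``danger set'' $F_{\xi'}$ and then select some $E_\xi$ disjoint from it, whereas the paper assumes every pair $\eta<\xi$ is incompatible, extracts a bad cross edge for each, and pigeonholes on the upper vertex to violate finite-to-one directly---these are dual formulations of the same idea.
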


\begin{proof}
  Suppose that $c$ is triangle-free. Our forcing notion $\bb{P}$ will consist of
  finite attempts to extend the colouring $c$. More precisely, conditions of
  $\bb{P}$ are pairs $p = (s^p, f^p)$ such that
  \begin{itemize}
    \item $s^p \in [\omega_1]^{<\omega}$;
    \item $f^p : s^p \rightarrow \omega$;
    \item for all $\alpha < \beta$ in $s^p$, it is not the case that
    $f^p(\alpha) = f^p(\beta) = c(\alpha, \beta)$.
  \end{itemize}
  If $p,q \in \bb{P}$, then $q \leq_{\bb{P}} p$ if $s^q \supseteq s^p$ and
  $f^q \supseteq f^p$.

  \begin{clm}
    $\bb{P}$ has the ccc.
  \end{clm}

  \begin{proof}
    Suppose for sake of contradiction that $\mathcal{A} =
    \{p_\eta \mid \eta < \omega_1\}$ is an antichain in $\bb{P}$.
    For $\eta < \omega_1$, let $s^\eta = s^{p_\eta}$ and $f^\eta = f^{p_\eta}$.
    By thinning out $\mathcal{A}$ if necessary, we can assume that the sets
    $\{s^\eta \mid \eta < \omega_1\}$ form a head-tail-tail $\Delta$-system
    with root $r$. More precisely, for all $\eta < \xi < \omega_1$, we have
    \begin{itemize}
      \item $s^\eta \cap s^\xi = r$; and
      \item $r < s^\eta \setminus r < s^\xi \setminus r$.
    \end{itemize}
    By thinning out further, we can also assume that there is a single function
    $g:r \rightarrow \omega$ such that $f^\eta \restriction r = f$ for all
    $\eta < \omega_1$.

    It follows that, for all $\eta < \xi < \omega_1$, $f^\eta \cup f^\xi$ is
    a function. Let $q_{\eta \xi} = (s^\eta \cup s^\xi, f^\eta \cup f^\xi)$.
    If $q_{\eta \xi}$ were a condition in $\bb{P}$, then it would be a common
    lower bound to $p_\eta$ and $p_\xi$, contradicting the assumption that
    $\mathcal{A}$ is an antichain. Therefore, by the definition of $\bb{P}$,
    there must be
    $\alpha_{\eta \xi} \in s^\eta \setminus r$ and $\beta_{\eta \xi} \in
    s^\xi \setminus r$ such that $f^\eta(\alpha_{\eta \xi}) = f^\xi(\beta_{\eta
    \xi}) = c(\alpha_{\eta \xi}, \beta_{\eta \xi})$.

    Now, if $\omega \leq \xi < \omega_1$, there must be a fixed
    $\beta_\xi \in s^\xi \setminus r$ such that the set
    $X = \{\eta < \xi \mid \beta_{\eta \xi} = \beta_\xi\}$ is infinite.
    But then, for all $\eta \in X$, we have
    $c(\alpha_{\eta \xi}, \beta_\xi) = f^\xi(\beta_\xi)$ and,
    for all $\eta < \eta'$ in $X$, we have $\alpha_{\eta \xi} <
    \alpha_{\eta' \xi} < \beta$. Therefore, $c(\cdot, \beta)$ is not
    finite-to-one, with the failure witnessed by the colour $f^\xi(\beta_\xi)$
    and the infinite set $\{\alpha_{\eta \xi} \mid \eta \in X\}$. This is
    a contradiction.
  \end{proof}
  For each $\alpha < \omega_1$, let $D_\alpha = \{p \in \bb{P} \mid
  \alpha \in s^p\}$. It is easily verified that $D_\alpha$ is dense in
  $\bb{P}$ for all $\alpha < \omega_1$. Thus, if $G$ is $\bb{P}$-generic over
  $V$, then $f_G = \bigcup \{f^p \mid p \in G\}$ is a function from
  $\omega_1$ to $\omega$ such that, for all $\alpha < \beta < \omega_1$,
  it is not the case that $f_G(\alpha) = f_G(\beta) = c(\alpha, \beta)$.
  By Proposition \ref{triangle_free_equiv_prop}, it follows that $c$ is not
  a maximal triangle-free colouring in $V[G]$.
\end{proof}

\begin{cor}
  If $\mathrm{MA}_{\omega_1}$ holds, then there are no maximal triangle-free
  colourings $c:[\omega_1]^2 \rightarrow \omega$ with finite-to-one fibers.
\end{cor}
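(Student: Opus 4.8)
The plan is to run the forcing from Theorem~\ref{finite-to-one_forcing_thm} inside the ground model, using $\mathrm{MA}_{\omega_1}$ in place of a generic extension, and thereby to produce directly the witnessing function demanded by Proposition~\ref{triangle_free_equiv_prop}.

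First dispose of the trivial case: a colouring that is not triangle-free is certainly not a \emph{maximal} triangle-free colouring, so we may assume $c:[\omega_1]^2 \rightarrow \omega$ is triangle-free with finite-to-one fibers. Let $\bb{P}$ be the poset constructed in the proof of Theorem~\ref{finite-to-one_forcing_thm}, whose conditions are the pairs $p = (s^p, f^p)$ with $s^p \in [\omega_1]^{<\omega}$ and $f^p : s^p \rightarrow \omega$ such that no $\alpha < \beta$ in $s^p$ satisfy $f^p(\alpha) = f^p(\beta) = c(\alpha,\beta)$, ordered by coordinatewise reverse inclusion. That proof establishes that $\bb{P}$ is ccc and that, for each $\alpha < \omega_1$, the set $D_\alpha = \{p \in \bb{P} \mid \alpha \in s^p\}$ is dense in $\bb{P}$.

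Next, apply $\mathrm{MA}_{\omega_1}$ to the ccc poset $\bb{P}$ and the $\aleph_1$-sized family $\{D_\alpha \mid \alpha < \omega_1\}$ of dense sets to obtain a filter $G \subseteq \bb{P}$ meeting every $D_\alpha$, and put $d = \bigcup\{f^p \mid p \in G\}$. Since any two conditions in $G$ have a common extension in $G$, $d$ is a well-defined function; since $G$ meets each $D_\alpha$, its domain is all of $\omega_1$; and given $\alpha < \beta < \omega_1$, choosing $r \in G$ below some element of $G \cap D_\alpha$ and some element of $G \cap D_\beta$ we get $\{\alpha,\beta\} \subseteq s^r$, so the fact that $r$ is a condition in $\bb{P}$ gives that it is not the case that $d(\alpha) = d(\beta) = c(\alpha,\beta)$. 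Hence $d:\omega_1 \rightarrow \omega$ witnesses, via Proposition~\ref{triangle_free_equiv_prop}, that $c$ is not a maximal triangle-free colouring.

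The argument has no genuine difficulty: all of the real content — in particular the ccc of $\bb{P}$, which is precisely where the finite-to-one hypothesis on the fibers of $c$ is used, together with the density of the $D_\alpha$ — has already been extracted in Theorem~\ref{finite-to-one_forcing_thm}, and what remains is the standard routine of reading off a suitably generic object for a ccc poset from Martin's axiom. The only point worth keeping in mind is that there are exactly $\aleph_1$-many dense sets $D_\alpha$ to be met, so one genuinely uses $\mathrm{MA}_{\omega_1}$ here rather than merely $\mathrm{MA}_{\aleph_0}$.
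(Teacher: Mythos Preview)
Your proof is correct and follows essentially the same approach as the paper: apply $\mathrm{MA}_{\omega_1}$ to the ccc poset $\bb{P}$ and the dense sets $\{D_\alpha \mid \alpha < \omega_1\}$ from Theorem~\ref{finite-to-one_forcing_thm} to extract a function $d:\omega_1 \rightarrow \omega$ witnessing non-maximality via Proposition~\ref{triangle_free_equiv_prop}. You have simply spelled out in more detail why the resulting $d$ is well-defined and total, which the paper leaves implicit.
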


\begin{proof}
  Assume that $\mathrm{MA}_{\omega_1}$ holds, and fix a triangle-free
  colouring $c:[\omega_1]^2 \rightarrow \omega$ with finite-to-one fibers.
  Apply $\mathrm{MA}_{\omega_1}$ to the poset $\bb{P}$ and the dense sets
  $\{D_\alpha \mid \alpha < \omega_1\}$ isolated in the proof of Theorem
  \ref{finite-to-one_forcing_thm} to obtain a function $f:\omega_1 \rightarrow
  \omega$ witnessing that $c$ is not maximal.
\end{proof}

Though we do not know of the consistency of a maximal triangle-free colouring
$c:[\omega_1]^2 \rightarrow \omega$ in a model of $\neg \mathrm{CH}$,
we can arrange the consistency of the existence of a a maximal triangle-free
colouring of some proper subset of $[\omega_1]^2$. Here, we say that
an $\omega$-colouring $F$ whose domain is a subset of $[\omega_1]^2$
is maximal if, for every $d:\omega_1 \rightarrow \omega$, there are
$\alpha < \beta < \omega_1$ such that $\{\alpha, \beta\} \in \mathrm{dom}(F)$
and $d(\alpha) = d(\beta) = F(\alpha, \beta)$.

\begin{thm}
There is a ccc poset $\mb P$ of size $\aleph_1$ such that
\[
  \force_{\mb P} ``\text{There is a maximal monochromatic triangle-free }\omega\text{-colouring }F\text{ with }\dom F\subs [\omg]^2".
\]
\end{thm}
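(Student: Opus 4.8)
The plan is to force $F$ with finite conditions. A condition $p$ is a finite partial function $F^p:[\omg]^2\rightharpoonup\oo$ with \emph{no monochromatic triangle}, i.e.\ there is no three-element set all of whose pairs lie in $\dom F^p$ and receive a common $F^p$-value; we order $\bb P$ by setting $q\le p$ if and only if $F^q\supseteq F^p$. (It may be necessary to attach to $p$ a further finite component --- e.g.\ a finite partial function recording, at finitely many ordinals $\gamma$, a colour that is henceforth forbidden on all edges incident to $\gamma$ --- in order to make the density argument below go through; I return to this point at the end.) Since a condition is coded by a finite subset of $[\omg]^2\times\oo$, there are only $\aleph_1$ of them, so $|\bb P|=\aleph_1$. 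If $G$ is $\bb P$-generic, set $F=\bigcup\{F^p:p\in G\}$. Then $\dom F\subs[\omg]^2$, and $F$ has no monochromatic triangle, since any such triangle would already be a monochromatic triangle of some condition in $G$.

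Next I would verify that $\bb P$ is ccc. Given $\aleph_1$ conditions, refine to an uncountable family $\mc A$ on which the vertex sets $V^p:=\bigcup\dom F^p$ form a $\Delta$-system with root $R$ and on which $F^p\restriction[R]^2$ is a fixed function. The crucial observation is that any two $p,q\in\mc A$ are then compatible. Indeed, $F^p\cup F^q$ is a function, since the only pairs lying in both $\dom F^p$ and $\dom F^q$ are pairs from $R$, on which the two conditions agree. Moreover $F^p\cup F^q$ has no monochromatic triangle: any triangle in it must be contained entirely in $V^p$ or entirely in $V^q$ (if it had a vertex in $V^p\setminus R$ and a vertex in $V^q\setminus R$, the edge joining those two vertices would belong to neither $\dom F^p$ nor $\dom F^q$, since $V^p\cap V^q=R$), and a monochromatic triangle inside $V^p$ (respectively $V^q$) would already be one in $p$ (respectively $q$), contradiction. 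So $F^p\cup F^q$ is a condition below $p$ and $q$.

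The heart of the matter is maximality. Since every $d:\omg\to\oo$ in $V^{\bb P}$ has a nice name, it suffices to show that for every $\bb P$-name $\dot d$ with $\Vdash$ ``$\dot d:\omg\to\oo$'', the set
$D_{\dot d}$ of conditions $p$ for which there exist $\{\alpha,\beta\}\in\dom F^p$ with $p\Vdash$ ``$\dot d(\check\alpha)=\dot d(\check\beta)=\check{F^p(\alpha,\beta)}$'' is dense; this is exactly the maximality criterion recalled before the theorem. Fix $p_0$. Working in $V$, for each $\gamma<\omg$ pick $q_\gamma\le p_0$ deciding $\dot d(\check\gamma)=\check m_\gamma$ for some $m_\gamma<\oo$. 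By pigeonhole together with a $\Delta$-system refinement as above (now of \emph{head-tail-tail} type, and also fixing the coloured-graph type of the configuration with the ordinal $\gamma$ marked, and with $\dot d\restriction R$ decided after a preliminary extension), we obtain an uncountable set on which $m_\gamma$ equals a fixed colour $m^*$, the $V^{q_\gamma}$ form a head-tail-tail $\Delta$-system with root $R$, and the type is constant. Pick $\gamma<\delta$ in this set and amalgamate to a condition $r$ with $F^r=F^{q_\gamma}\cup F^{q_\delta}$ (so $r\le q_\gamma,q_\delta$ and $r\Vdash\dot d(\check\gamma)=\dot d(\check\delta)=\check m^*$), and try to extend $r$ by declaring $F(\{\gamma,\delta\})=m^*$; if this creates no monochromatic triangle we are done, as $\{\gamma,\delta\}$ witnesses membership of the resulting condition in $D_{\dot d}$.

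The main obstacle is precisely the case where declaring $F(\{\gamma,\delta\})=m^*$ \emph{would} create a monochromatic triangle. By the head-tail-tail arrangement this can happen only through a common $m^*$-coloured neighbour of $\gamma$ and $\delta$ lying in the root $R$; and in that case $q_\gamma$ already forces $\dot d(\check\gamma)=\dot F(\check\gamma,\check w)=\check m^*$ for some fixed $w\in R$. I would resolve this as follows: if the preliminary extension decided $\dot d(\check w)=\check m^*$, then $\{\gamma,w\}$ is already a bad pair and $q_\gamma\in D_{\dot d}$; otherwise one wants either to have chosen $q_\gamma$ so as to avoid any $m^*$-coloured edge from $\gamma$ into the (eventual) root --- this is where the auxiliary ``forbidden colour'' component of the conditions would be used, to guarantee that such a choice is always available --- or to steer the value of $\dot d$ at $w$ to $m^*$ by a further extension, again reducing to the triangle-free amalgamation. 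Getting this case analysis clean, i.e.\ pinning down exactly what extra bookkeeping a condition must carry so that one can always reach the good case, is where the real work lies; the remaining ingredients (the cccness above, the reduction to nice names, and checking that the resulting set really is dense) are routine.
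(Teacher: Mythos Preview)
Your ccc argument is fine, and you have correctly located the obstacle: when you amalgamate $q_\gamma$ and $q_\delta$ and add the edge $\{\gamma,\delta\}$ in colour $m^*$, a monochromatic triangle can appear through a common neighbour $w$ lying in the root $R$. But neither of your proposed fixes closes the gap. The second --- steering $\dot d(w)$ to $m^*$ by a further extension --- fails outright, because by your own setup the preliminary extension has already decided $\dot d\restriction R$, and $w\in R$; you cannot change that value. The first --- attaching a finite ``forbidden colour'' component --- runs into a circularity: the bad colour $m^*$ and the root $R$ are determined only \emph{after} you have chosen all the $q_\gamma$ and performed the pigeonhole and $\Delta$-system refinements, so you cannot have forbidden the offending edge in advance. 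Any finite bookkeeping you attach is visible to the name $\dot d$, and an adversarial $\dot d$ can arrange that the decided value $m_\gamma$ is exactly the colour of some edge from $\gamma$ into what later becomes the root.

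The paper's resolution is structural, not bookkeeping. Before defining $\bb P$, fix for each limit $\delta<\omega_1$ an ordinal $\varepsilon_\delta<\delta$ so that $\{\delta:\varepsilon_\delta=\varepsilon\}$ is stationary for every $\varepsilon$, and add to the definition of a condition the clause that any edge $\{\delta',\delta\}$ with $\delta'<\delta$ must satisfy $\varepsilon_\delta\le\delta'$. This costs nothing for ccc. For maximality one uses a continuous chain of countable elementary submodels $(M_\alpha)_{\alpha<\omega_1}$ containing the name $\dot d$: set $\varepsilon=M_0\cap\omega_1$, use stationarity to pick $\alpha$ with $\varepsilon_\delta=\varepsilon$ where $\delta=M_\alpha\cap\omega_1$, and extend to a condition $q$ deciding $\dot d(\delta)=k$. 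A double reflection --- first through $M_\alpha$, then through $M_0$ --- produces an isomorphic condition $q'$ whose copy $\delta'$ of $\delta$ lies in $M_\alpha\setminus M_0$, with the common root contained in $M_0$. Now the root sits below $\varepsilon=\varepsilon_\delta$, while the edge restriction guarantees that $\delta$ has \emph{no} neighbours below $\varepsilon_\delta$; hence adding the edge $\{\delta',\delta\}$ in colour $k$ cannot close a triangle through the root. This edge-restriction-plus-stationary-guessing device is the missing idea; once it is in place, the rest of your outline goes through.
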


In particular, the continuum can be arbitrary large. However, we lack  techniques to define a maximal $F$ on all of $[\omg]^2$.

\begin{proof}
For each $\delta\in \lim \omg$, pick $\vareps_\delta<\delta$ so that the set $\{\delta \mid \vareps_\delta = \vareps\}$ is stationary for all $\vareps$.

Let $\mb P$ consist of all $p=(s^p,(g^p_k)_{k\in n^p})$ so that
\begin{enumerate}
    \item $s^p\in [\omg]^{<\oo}$, $n^p\in \oo$, $g^p_k\subs [s^p]^2$;
    \item  $g^p_k$ is triangle-free;
    \item  $g^p_k\cap g^p_\ell=\emptyset$ for all $k<\ell<n^p$; and
    \item for $\delta' < \delta$, if $\delta'\delta\in g^p_k$, then
    $\vareps_\delta\leq \delta'$.
\end{enumerate}

If $H\subs \mb P$ is a generic filter, we let $F(\delta'\delta)=k$ for some $\delta'<\delta$ if there is  $p\in H$ so that $k<n^p$ and $\delta'\delta\in g^p_k$.\footnote{So, in a  condition $p$, $g^p_k$ approximates the colour class $F^{-1}(k)$.}

Let us show that $F$ is maximal, which will also imply that $\omg$ is not collapsed.

\begin{clm}
For any partition $\omg=\bigcup_{k\in \oo}X_k$ there is some $k<\oo$ and $\delta'<\delta\in X_k$ so that $F(\delta'\delta)=k$.
\end{clm}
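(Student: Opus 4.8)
The plan is to recast maximality of $F$ as in Proposition~\ref{triangle_free_equiv_prop}. Let $\dot d$ be a $\mb P$-name with $\dot d(\delta)=k$ exactly when $\delta\in X_k$; then the Claim is equivalent to: no $p\in\mb P$ forces ``$\dot d$ is a function $\omg\to\oo$ and, for every $k<\oo$, $\dot X_k:=\{\delta:\dot d(\delta)=k\}$ is an independent set in the graph $F^{-1}(k)$'' (an edge of $F^{-1}(k)$ inside $\dot X_k$ being precisely a pair $\delta'<\delta$ in the same piece with $F(\delta'\delta)=k$). So I would argue by contradiction: suppose some $p$ forces this. Fix also a club $C\subseteq\omg$ of limit ordinals, obtained from a continuous $\in$-chain of countable elementary submodels of a large $H_\theta$ containing $\mb P,p,\dot d$ and $\langle\varepsilon_\delta\rangle$; what I will use is that $\varepsilon_\gamma<\gamma$ for $\gamma\in C$, that then $[\varepsilon_\gamma,\gamma)$ is infinite, and that suitable choices can be kept inside $C$. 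Pick $\gamma_0\in C$ and $q_0\le p$ deciding $\dot d(\gamma_0)=\ell_0$.

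The core of the argument is a descending ``chase''. Recursively, for $i<\oo$, I would build $\gamma_0>\gamma_1>\cdots$ in $C$, conditions $q_0\ge q_1\ge\cdots$, and colours $\ell_0,\ell_1,\dots$ with $q_{i+1}\Vdash$ ``$\gamma_{i+1}\gamma_i\in g_{\ell_i}$ and $\dot d(\gamma_{i+1})=\ell_{i+1}$''. Given $\gamma_i,q_i,\ell_i$: since $\gamma_i$ is a limit ordinal, $[\varepsilon_{\gamma_i},\gamma_i)$ is infinite, and $C\cap[\varepsilon_{\gamma_i},\gamma_i)$ can be taken infinite; only finitely many of its members $\gamma$ are \emph{barred}, namely those for which $\gamma\gamma_i$ already lies in some $g^{q_i}_j$ with $j\neq\ell_i$, or $\gamma$ is already an $\ell_i$-neighbour of some $\ell_i$-neighbour of $\gamma_i$ in $q_i$ (finitely many of each, since $q_i$ is finite). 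Choose an unbarred $\gamma_{i+1}\in C\cap[\varepsilon_{\gamma_i},\gamma_i)$. Then adjoining $\gamma_{i+1}\gamma_i$ to the $\ell_i$-th colour class of $q_i$ (first increasing $n^{q_i}$ past $\ell_i$ if necessary) yields a condition: clause (3) holds by the first unbarred clause, triangle-freeness of the $\ell_i$-th class by the second, and clause (4) for the new edge is exactly $\varepsilon_{\gamma_i}\le\gamma_{i+1}$, which holds by choice of $\gamma_{i+1}$. Extend this condition to $q_{i+1}\le q_i$ deciding $\dot d(\gamma_{i+1})=\ell_{i+1}$. Since $q_{i+1}\le p$, it forces $\gamma_i\in\dot X_{\ell_i}$, $\gamma_{i+1}\gamma_i\in F^{-1}(\ell_i)$, and that $\dot X_{\ell_i}$ is independent in $F^{-1}(\ell_i)$; hence $q_{i+1}\Vdash\gamma_{i+1}\notin\dot X_{\ell_i}$, i.e.\ $\ell_{i+1}\neq\ell_i$, and the recursion continues through all of $\oo$. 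But then $\gamma_0>\gamma_1>\gamma_2>\cdots$ is a strictly decreasing $\oo$-sequence of ordinals, which is absurd. This contradiction proves the Claim; it also yields that $\omg$ is not collapsed, since a collapse would present $\omg$ as a union of countably many singletons, a ``partition'' for which the conclusion visibly fails.

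The step I expect to be the main obstacle — and the place where clause (4) together with the stationarity of the fibres $\{\delta:\varepsilon_\delta=\varepsilon\}$ do the real work — is guaranteeing that every stage of the chase can be carried out, i.e.\ that below $\gamma_i$ there is an admissible $\gamma_{i+1}$ (admissible meaning: $\ge\varepsilon_{\gamma_i}$, by the lower bound clause (4) imposes on new edges; not among the finitely many barred ordinals; and itself ``large'' enough that the chase extends beyond it). The delicate point is to maintain throughout the construction the property that $\gamma_i$ is a limit ordinal for which $[\varepsilon_{\gamma_i},\gamma_i)$ stays infinite after the finitely many barred ordinals are deleted; this is where one wants $\gamma_i$ drawn from a stationary set of limit ordinals chosen compatibly with $C$, exploiting that each fibre $\{\delta:\varepsilon_\delta=\varepsilon\}$ is stationary. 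The routine parts are checking that each $q_{i+1}$ is genuinely a condition (clauses (1)–(4)) and the bookkeeping of the recursion; the creative content is entirely in setting up the chase so that it is forced to descend while remaining inside the forcing.
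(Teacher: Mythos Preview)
Your descending-chase argument has a genuine gap at exactly the point you flag as ``delicate,'' and I do not see how to close it within your framework. The recursion requires at each stage that $C\cap[\varepsilon_{\gamma_i},\gamma_i)$ contain an unbarred ordinal; for this you need $\gamma_i$ to be a limit point of $C$. But nothing in your construction forces $\gamma_{i+1}$ to be a limit point of $C$: you are choosing it from $C\cap[\varepsilon_{\gamma_i},\gamma_i)$, and that set may well consist entirely of ordinals that are \emph{not} limit points of $C$, in which case the chase dies at the next step. Passing to the club $C'$ of limit points of $C$ does not help, since you would then need each $\gamma_i$ to be a limit of $C'$, and so on; no club is ``perfect'' in the sense that every member is a limit of members. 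The stationarity of the fibres $\{\delta:\varepsilon_\delta=\varepsilon\}$ is a global statement about $\omega_1$ and gives you no control over the local structure of $[\varepsilon_{\gamma_i},\gamma_i)$ once $\gamma_i$ is fixed. Since any strictly decreasing sequence of ordinals must be finite, your construction \emph{will} halt, and you have not explained how halting yields the desired contradiction with $\neg$Claim.

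The paper's proof avoids this entirely by not descending at all. Instead, it uses the stationarity of the fibres in a single, targeted way: one first fixes $\varepsilon=M_0\cap\omega_1$ and then uses stationarity to choose $\delta=M_\alpha\cap\omega_1$ with $\varepsilon_\delta=\varepsilon$. This pins the clause-(4) constraint for $\delta$ to the height of $M_0$, which is exactly what makes a double-reflection amalgamation work: one reflects a condition $q$ deciding $\delta\in\dot X_k$ to an isomorphic $q'$ with $\delta'<\delta$ and $s^{q'}\cap\delta'$ sitting inside $M_0$, and then $r=q\cup q'\cup\{\delta'\delta\mapsto k\}$ is a condition because any putative triangle through the new edge would need a common neighbour in $s_0\subseteq\varepsilon=\varepsilon_\delta$, which clause (4) forbids. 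So the role of clause (4) and the stationary fibres is not to sustain a chase but to arrange, in one shot, that the amalgamation obstacle lands below $\varepsilon_\delta$.
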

\begin{proof}
  Working back in $V$, fix $\bb{P}$-names $(\dot{X}_k)_{k \in \omega}$
  for $(X_k)_{k \in \omega}$, and fix a condition $p$ such that $p\force ``\omg=\bigcup_{k\in \oo}\dot X_k"$. Take a continuous, increasing sequence of elementary submodels $(M_\alpha)_{\alpha<\omg}$ of
some sufficiently large $H(\theta)$ so that $p,(\dot X_k)_{k \in \omega}
\in M_0$.

Let $\vareps=\omg \cap M_0$ and find $\alpha<\omg$ so that $\vareps=\vareps_{\delta}$ where $\delta=M_\alpha\cap \omg$. Now, find some $q\leq p$ and $k\in \oo$ so that $\delta \in s^q$ and $q\force ``\delta\in \dot X_k"$. By extending $q$ further, we can assume that $k<n^q$.

We can write $s^q$ as the union of the three sets $s_0<s_1<s_2$ where $s_0=s^q\cap M_0$, $s_1=s^q\cap M_\alpha\setm M_0$, and $s_2 = s^q \setminus M_\alpha$.

\begin{subclaim}
There is a $q'\in \mb P$ with $s^{q'}=s_0\cup s_1'\cup s_2'$ with $\delta'=\min s_2'$ and
\begin{enumerate}[(i)]
    \item  $q$ and $q'$ are isomorphic,
    \item $s_0<s_1'\subs M_0$, $s_1<s_2'\subs M_\alpha$,
    \item $q'\force ``\delta'\in \dot X_k"$.
\end{enumerate}

\end{subclaim}

The proof is a double reflection argument using elementarity.
\medskip

Now, we can define a condition $r$ that extends both $q$ and $q'$ such
that $r\force ``\dot{F}(\delta'\delta)=k"$. Indeed, we let $s^r=s^q\cup s^{q'}$, $n^r=n^q=n^{q'}$ and $g^r_\ell=g^q_\ell\cup g^{q'}_\ell$ for $k\neq \ell<n^q$ and let $g^r_k=g^q_k\cup g^{q'}_k\cup\{\delta'\delta\}$.

The only way that $r$ can fail to be a condition in $\bb{P}$ is if $g^r_k$ is not triangle-free. However, any triangle in $g^r_k$ must contain the new edge $\delta'\delta$ and their common neighbour must lie in $s_0$. However $s_0\subs \vareps_\delta$ while any neighbour of $\delta$ is at least $\vareps_\delta$.
\end{proof}

Proving the ccc of $\mb P$ is very similar but we don't even need to add any edge when amalgamating isomorphic conditions.
\end{proof}

Note that in the above proof, we proved that each colour class of $F$ has uncountable chromatic number.

\section{Further remarks on triangle-free colourings}
\label{further_remarks_section}

In this section, we prove some further results about triangle-free colourings
on $[\omega_1]^2$ motivated by the question from the previous section about
whether there necessarily exists a maximal triangle-free colouring
$c:[\omega_1]^2 \rightarrow \omega$. This can be seen as a specific instance of
a more general question: Do there exist triangle-free colourings
$c:[\omega_1]^2 \rightarrow
\omega$ in which all colour classes $c^{-1}(\{k\})$ are ``large''? This
leads naturally to the consideration of square bracket partition relations,
whose definition we now recall.

\begin{dfn}
  For cardinals $\kappa, \lambda, \mu$, and $\theta$, the partition relation
  $\kappa \rightarrow [\lambda]^\theta_\mu$ is the assertion that, for every
  $f:[\kappa]^\theta \rightarrow \mu$, there is $X \in [\kappa]^\lambda$ such
  that $f``[X]^\theta \neq \mu$, i.e., $f$ omits at least one colour on
  $[X]^\theta$. If $\kappa$ is a regular cardinal, then the relation
  $\kappa \rightarrow [Stat]^\theta_\mu$ is obtained by replacing the
  requirement $X \in [\kappa]^\lambda$ above with the requirement that
  $X$ is stationary in $\kappa$.
\end{dfn}

Much work has been done analyzing colourings on $[\omg]^2$ witnessing the
failure of square bracket partition relations, with the most notable result being
Todor\v{c}evi\'{c}'s proof of $\omega_1 \not\rightarrow [\omega_1]^2_{\omega_1}$
in \cite{todorcevic_partitioning}. Recall from the previous section that, if
$c:[\omega_1]^2 \rightarrow \omega$ is triangle-free, then it is a \emph{maximal}
triangle-free colouring into $\omega$ if, for every function
$d:\omega_1 \rightarrow \omega$, there are $\alpha < \beta < \omega_1$ such
that $d(\alpha) = d(\beta) = c(\alpha, \beta)$. This latter condition is
easily seen to be satisfied if $c$ witnesses $\omega_1 \not\rightarrow
[\omega_1]^2_\omega$. However, the following easy observation shows that
such colourings can never be triangle-free.

\begin{obs}
Suppose that $c:[\omg]^2\to \oo$ witnesses $\omg\not\to[Stat]^2_\oo$. Then $c$
has infinite monochromatic sets in all colours.
\end{obs}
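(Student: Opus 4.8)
The plan is to exploit the contrast between the two kinds of "largeness" in the hypothesis $\omega_1 \not\to [\mathrm{Stat}]^2_\omega$: the colouring $c$ uses \emph{all} colours on \emph{every} stationary set, whereas a triangle-free colouring is in particular a bipartite graph in each colour. First I would fix an arbitrary colour $n < \omega$ and try to build an infinite $c$-monochromatic set of colour $n$ by recursion. The key observation is that, at any finite stage, if I have built a finite $c$-monochromatic set $\{a_0, \dots, a_{k-1}\}$ of colour $n$, I want to find a new point $a_k$ with $c(a_i, a_k) = n$ for all $i < k$. The obstacle is that such a point need not exist for a badly-behaved $c$; so instead I should set up the recursion so that I am always working inside a stationary set over which $c$ still assumes every colour, and peel off one vertex at a time.

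The cleaner route is to argue by contradiction: suppose some colour $n$ has no infinite $c$-monochromatic set, i.e.\ $G_n = c^{-1}(\{n\})$ has no infinite clique. Since $c$ is triangle-free, $G_n$ has no triangle either, so in fact (being triangle-free) the "clique" obstruction is automatic and I instead need to use that $c$ omits colour $n$ on no stationary set. Here is the mechanism I would use. For each $\beta < \omega_1$ consider the "predecessor colour profile" and run a pressing-down argument: since $c$ witnesses $\omega_1 \not\to [\mathrm{Stat}]^2_\omega$, for every stationary $S$ there are $\alpha < \beta$ in $S$ with $c(\alpha,\beta) = n$. Starting from $S_0 = \omega_1$, pick such a pair, let $a_0$ be the smaller point, and then pass to the stationary set $S_1 = \{\gamma > a_0 : c(a_0, \gamma) = n\}$ — but this last set is exactly a fibre of $G_n$ above $a_0$, which need not be stationary. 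This is precisely the main obstacle, and to get around it I would instead iterate the partition relation itself: at stage $k$, I have a stationary $S_k$ and a finite $c$-monochromatic set $F_k$ of colour $n$ with $F_k \subseteq \omega_1 \setminus S_k$ (say $\sup F_k < \min S_k$) such that every point of $S_k$ is $c$-connected to every point of $F_k$ by colour $n$; applying $\omega_1 \not\to [\mathrm{Stat}]^2_\omega$ inside $S_k$ gives $\alpha < \beta$ in $S_k$ with $c(\alpha,\beta)=n$, set $F_{k+1} = F_k \cup \{\alpha\}$, and let $S_{k+1} = \{\gamma \in S_k : \gamma > \alpha \text{ and } c(\alpha, \gamma) = n\}$ — but again I must ensure $S_{k+1}$ is stationary.

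The fix for that is to not require stationarity of the intermediate sets but only that the \emph{homogeneity requirement is re-established on a stationary tail}, which one arranges by a bookkeeping / elementary-submodel argument: take a continuous chain $(M_\xi)_{\xi < \omega_1}$ of elementary submodels of some $H(\theta)$ containing $c$ and $n$, let $C = \{M_\xi \cap \omega_1 : \xi < \omega_1\}$, and work along the club $C$. At a point $\delta = M_\xi \cap \omega_1$, elementarity of $M_\xi$ together with the fact that $c$ omits no colour on any stationary set lets me find, inside $M_\xi$, points realizing colour $n$ against any fixed finite subset of $M_\xi$; since the chain is continuous, taking a union over $\omega_1$-many stages produces an infinite (indeed uncountable) $c$-monochromatic set of colour $n$. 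I expect the main technical point to be verifying that the finite approximations extend — i.e.\ that at each stage the set of good extensions is nonempty — and this is where one must combine $\omega_1 \not\to [\mathrm{Stat}]^2_\omega$ with triangle-freeness (triangle-freeness guarantees that adding a new vertex to an existing colour-$n$ clique does not create a colour-$n$ triangle, so there is no internal obstruction beyond finding the vertex itself). Once the construction runs, the resulting infinite monochromatic set in colour $n$, for arbitrary $n$, finishes the observation.
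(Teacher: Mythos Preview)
Your proposal has a genuine gap, and it also misreads the hypothesis. The observation does \emph{not} assume that $c$ is triangle-free; indeed, its whole point (in context) is to show that colourings witnessing $\omega_1 \not\to [\mathrm{Stat}]^2_\omega$ can never be triangle-free. So the appeals to triangle-freeness in your argument are illegitimate, though in fact they do no real work anyway.

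The substantive gap is in the recursive construction. At each stage you need, given a finite $c$-monochromatic set $F$ of colour $n$, to find a new point $\gamma$ with $c(a,\gamma)=n$ for every $a\in F$. You correctly identify that the set of such $\gamma$ need not be stationary, and your ``fix'' via elementary submodels does not repair this: elementarity of $M_\xi$ lets you reflect down a witness only if such a witness exists in $V$, and the existence of an extension of $F$ is exactly what is in doubt. The hypothesis $\omega_1 \not\to [\mathrm{Stat}]^2_\omega$ only gives you \emph{some} pair of colour $n$ inside any stationary set, not a point colour-$n$-connected to an arbitrary prescribed finite set.

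The paper's proof is a one-line application of the Dushnik--Miller relation $\omega_1 \to (\mathrm{Stat},\omega)^2_2$: for a fixed colour $k$, colour a pair $1$ if it has $c$-colour $k$ and $0$ otherwise. Dushnik--Miller yields either a stationary $0$-homogeneous set (so $c$ omits $k$ on a stationary set, contradicting the hypothesis) or an infinite $1$-homogeneous set (the desired infinite $c$-monochromatic set in colour $k$). The missing idea in your attempt is precisely this partition relation; your recursive scheme, if it could be made to work, would amount to reproving it.
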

\begin{proof}
  This follows from the Dushnik-Miller relation: $\omg\to(Stat,\oo)^2_2$, i.e.,
  for every function $f:[\omega_1]^2 \rightarrow 2$, there is either a
  stationary $X \subseteq \omega_1$ such that $f``[X]^2 = \{0\}$ or there is
  an infinite $Y \subseteq \omega_1$ such that $f``[Y]^2 = \{1\}$. If we could not
  find an infinite monochromatic set of colour $k$ then there would be a
  stationary set that omits colour $k$, which contradicts the assumption on $c$.
\end{proof}

On the other hand, the following holds, where $\mf s$ denotes the
\emph{splitting number}.\footnote{Unpublished result from personal communication.}

\begin{thm}[D. Raghavan]
  Suppose that $\mf s=\aleph_1$. Then there is a
  triangle-free $c:[\omg]^2\to \oo$ so that for any uncountable $X\subset \omg$, $c``[X]^2$
  is co-finite.
\end{thm}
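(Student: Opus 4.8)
The plan is to exploit the hypothesis $\mathfrak{s}=\aleph_1$ by fixing a \emph{splitting family} $\{S_\xi \mid \xi<\omega_1\}\subseteq[\omega]^\omega$, i.e.\ a family of size $\aleph_1$ such that every infinite subset of $\omega$ is split by some $S_\xi$ (neither almost contained in $S_\xi$ nor in $\omega\setminus S_\xi$). The idea is to encode, for each countable ordinal $\beta$, a function $f_\beta:\omega_1\to\omega$ in such a way that the colouring $c(\alpha,\beta)$ (for $\alpha<\beta$) reads off a ``point of splitting'' between $f_\alpha$ and $f_\beta$, so that on any uncountable $X$ the colours realised are co-finite, while triangle-freeness comes for free because $c$ will in fact be (isomorphic to) a $\delta$-colouring in the sense of Definition~\ref{delta_colouring_prop} — recall such colourings are automatically odd-cycle-free, hence triangle-free.

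Concretely, here is the order I would carry things out. First, build by recursion on $\beta<\omega_1$ an injective sequence $\langle f_\beta \mid \beta<\omega_1\rangle$ of elements of $\omega^\omega$ (or a suitable variant) together with the values $c(\alpha,\beta)$ for $\alpha<\beta$. At stage $\beta$, having the $f_\alpha$ for $\alpha<\beta$ in hand, enumerate $\beta=\{\alpha_n\mid n<\omega\}$ and choose $f_\beta$ diagonally so that, for each $n$, there is a witness $m_n=c(\alpha_n,\beta)$ at which $f_\beta$ and $f_{\alpha_n}$ ``disagree in the splitting-family sense''; the point is to do this while keeping the data coherent, which is where a bookkeeping device (the splitting family indexed along $\omega_1$) is used to guarantee that the disagreement witnesses cannot all be pushed off to infinity on an uncountable set. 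Second, verify triangle-freeness: show directly that $c$ has no monochromatic triple, either by checking the $\delta$-colouring property or by a short argument that a monochromatic triangle would force a contradiction in the encoding (three functions pairwise ``splitting at the same coordinate''). Third — the main content — take an arbitrary uncountable $X\subseteq\omega_1$ and show $\omega\setminus c``[X]^2$ is finite: suppose some infinite set $M\subseteq\omega$ of colours is \emph{omitted} on $[X]^2$; use the splitting property of the family together with the recursion to derive an infinite descending chain or an uncountable ``unsplittable'' set contradicting $\mathfrak s=\aleph_1$, or more likely a direct contradiction with how $f_\beta$ was constructed to split all previously-seen functions.

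The hard part will be the third step — arranging the recursion so that co-finiteness of $c``[X]^2$ is \emph{forced} for every uncountable $X$ simultaneously, not just for $X$ known in advance. This is exactly the ``every infinite set is split'' strength of $\mathfrak s=\aleph_1$: the construction must ensure that for an uncountable $X$ and a would-be omitted colour $m$, the restriction of the $f_\beta$'s $(\beta\in X)$ to coordinate $m$ behaves like an infinite set that \emph{no} $S_\xi$ splits, contradicting that $\{S_\xi\}$ is a splitting family. Getting the encoding to make that reduction literally true — probably by having $c(\alpha,\beta)$ be the least $m$ such that $S_{g(\alpha)}$ splits $f_\beta$ ``at $m$'' for an appropriate auxiliary indexing $g$, and ensuring at stage $\beta$ that $f_\beta$ is chosen to be split by $S_\xi$ at cofinitely many coordinates for cofinitely many $\xi$ — is the delicate combinatorial core, and I would spend most of the write-up pinning down those details and the exact meaning of ``split at coordinate $m$''. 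The triangle-freeness and the Erd\H{o}s–Rado bound $|X|\le 2^\omega$ are routine by comparison.
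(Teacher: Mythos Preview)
The paper does \emph{not} supply a proof of this theorem: it is attributed to Raghavan via the footnote ``Unpublished result from personal communication'' and no argument is given. So there is nothing in the paper to compare your attempt against.

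Assessing your proposal on its own: what you have written is a research plan, not a proof. You correctly identify the two natural ingredients --- a splitting family of size $\aleph_1$ to exploit the hypothesis $\mathfrak s=\aleph_1$, and a $\delta$-colouring structure (choosing $c(\alpha,\beta)$ to be a coordinate where $f_\alpha$ and $f_\beta$ disagree) to secure triangle-freeness for free --- and these are almost certainly the right ingredients. But the entire content of the theorem lies in the step you explicitly defer: the precise recursion for the $f_\beta$'s and the exact definition of $c(\alpha,\beta)$ that forces $c``[X]^2$ to be co-finite for \emph{every} uncountable $X$. Your suggested mechanism (``$c(\alpha,\beta)$ the least $m$ such that $S_{g(\alpha)}$ splits $f_\beta$ at $m$'') is not yet meaningful, since ``splits at coordinate $m$'' is undefined, and it is not at all clear that a least such $m$ would be a coordinate of disagreement between $f_\alpha$ and $f_\beta$ (which you need for the $\delta$-colouring/triangle-freeness claim). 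Nor is it clear how an omitted infinite colour set $M$ on an uncountable $X$ would produce an infinite subset of $\omega$ unsplit by every $S_\xi$; you assert this reduction but give no mechanism for it.

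In short: the architecture is plausible, but the proposal as written contains no argument for the main implication, only a promise to find one. Until the recursion and the definition of $c$ are specified precisely enough that one can actually run the ``omitted colours $\Rightarrow$ unsplit set'' reduction, this is not yet a proof.
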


We wonder if the conclusion of this theorem holds in ZFC.

\medskip

One way in which a subset of $[\omega_1]^2$ can be considered ``large" is
by having uncountable chromatic number as a graph. It is easily seen
that there are always triangle-free colourings $c:[\omega_1]^2 \rightarrow
\omega$ for which all colour classes have uncountable chromatic number.

\begin{prop}
  There is a colouring  $c:[\omg]^2\to \oo$ so that each colour class $G_n=c^{-1}(n)$ is triangle-free of uncountable chromatic number.
\end{prop}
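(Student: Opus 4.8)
The plan is to build the colouring $c:[\omega_1]^2 \to \omega$ so that each colour class $G_n$ is itself a known example of a triangle-free graph on $\omega_1$ with uncountable chromatic number, such as a Hajnal–M\'at\'e graph; the work is in fitting countably many such graphs together so that they partition $[\omega_1]^2$. First I would recall the standard construction: fix for each limit $\delta < \omega_1$ a cofinal sequence $\langle \delta(m) \mid m < \omega \rangle \to \delta$, and note that a graph built by connecting $\delta$ to (some of) its $\delta(m)$'s is automatically triangle-free provided the sequences are chosen with enough genericity (e.g.\ so that no $\delta'$ is simultaneously some $\delta_1(m)$ and $\delta_2(\ell)$ with $\delta_1 < \delta_2$ both connected to $\delta'$), and has uncountable chromatic number by the usual pressing-down argument applied to any countable colouring.

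The main idea for the partition is a bookkeeping scheme: enumerate each neighbourhood appropriately. Concretely, for each limit $\delta$, I would assign the colour $c(\delta(m), \delta) = m$, or more robustly $c(\delta', \delta) = $ ``the position of $\delta'$ in an enumeration of a suitably chosen cofinal sequence in $\delta$,'' and assign successor-level or non-cofinal pairs an auxiliary fixed colour (or distribute them so as not to disturb triangle-freeness of any class). The point is that within a single colour $n$, the edges at $\delta$ pointing downward are confined to a single vertex (the $n$-th term of $\delta$'s sequence), so each $G_n$ has the Hajnal–M\'at\'e shape and hence is triangle-free; and by choosing the cofinal sequences generically (a $\diamondsuit$-free, purely ZFC choice suffices here, since uncountable chromatic number of such graphs is provable in ZFC — this is exactly the classical fact, unlike the stronger Hajnal–M\'at\'e-with-uncountable-chromatic-number-\emph{and}-Hajnal–M\'at\'e-property statement needing $\diamondsuit$ mentioned in the introduction) one guarantees every $G_n$ meets every uncountable set in an edge, giving uncountable chromatic number.

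For the verification I would check three things in order: (1) $c$ is well-defined and every pair gets exactly one colour — this is routine from the enumeration scheme; (2) each $G_n$ is triangle-free — here the key point is that a triangle $\{\alpha, \beta, \gamma\}$ with $\alpha < \beta < \gamma$ monochromatic of colour $n$ would force $\alpha$ and $\beta$ to both be the $n$-th term of $\gamma$'s sequence, or a similar collision, which the genericity of the sequences rules out; (3) each $G_n$ has uncountable chromatic number — fix a countable colouring $f$ of $G_n$, use Fodor's lemma to find a stationary set on which $f$ and the relevant $n$-th terms are constant, and extract a monochromatic edge. I expect step (2), arranging the cofinal sequences so that \emph{all} colour classes are simultaneously triangle-free, to be the main obstacle: one must choose the sequences $\langle \delta(m) \rangle$ coherently enough that no downward collision occurs inside any fixed colour, which amounts to a diagonalization over countably many requirements per ordinal and is the technical heart of the argument; everything else is bookkeeping.
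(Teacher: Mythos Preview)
Your approach has a fatal gap. In your scheme $c(\delta(m),\delta)=m$, the colour-$n$ down-neighbourhood of each limit $\delta$ is the single vertex $\delta(n)$. A graph on $\omega_1$ in which every vertex has at most one smaller neighbour is a forest, hence $2$-chromatic. So none of your colour classes $G_n$ (except possibly the ``auxiliary'' one absorbing all leftover pairs) has uncountable chromatic number; and the auxiliary class, containing all pairs not of the form $(\delta(m),\delta)$, is essentially all of $[\omega_1]^2$ minus a sparse set and certainly contains triangles.

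Underlying this is a factual error: you assert that Hajnal--M\'at\'e graphs have uncountable chromatic number ``provably in ZFC'', and that only the \emph{triangle-free} strengthening needs $\diamondsuit$. This is false. As recalled later in the paper, $\mathrm{MA}_{\aleph_1}$ implies that \emph{every} Hajnal--M\'at\'e graph has countable chromatic number; already the existence of \emph{any} uncountably chromatic Hajnal--M\'at\'e graph requires extra axioms. There is no ``usual pressing-down argument'' here: Fodor gives you a stationary set on which $f$ is constant, but that set need not carry any $G_n$-edges.

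The paper's proof is both different and much simpler. Partition $\omega_1=\bigcup_n S_n$ into uncountable pieces, put on each $S_n$ an arbitrary triangle-free graph $H_n$ of uncountable chromatic number (these exist in ZFC on any set of size $\aleph_1$ by the classical Erd\H{o}s--Rado construction), and set $c(\alpha,\beta)=n$ whenever $\{\alpha,\beta\}\in H_n$. The remaining pairs $(\alpha,\beta)$ with $\beta\in S_n$ are coloured injectively into $\omega\setminus\{n\}$. Then $G_n\supseteq H_n$ guarantees uncountable chromatic number, and triangle-freeness of $G_n$ follows because any edge of colour $n$ at a vertex $\beta\notin S_n$ is unique below $\beta$, while at $\beta\in S_n$ all colour-$n$ edges lie in $H_n$. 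No ladder systems, no Hajnal--M\'at\'e machinery, and no diagonalisation are needed.
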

\begin{proof}
  Let $\omg=\bigcup\{S_n:n<\oo\}$ be a partition into uncountable sets, and, for each $n < \omega$, let $H_n$ be a triangle-free graph of uncountable chromatic number with vertex set $S_n$ (for an example of such a graph, see \cite{erdos_rado}). Now, define $c$ so that $c(\alpha,\beta)=n$ if $\alpha\beta\in H_n$ and so that for any $\beta \in S_n$,
  \[
    c(\cdot,\beta)\uhr\{\alpha<\beta:\alpha\beta\notin\bigcup_{n<\oo}H_n\}
  \]
  is injective and maps into $\omega \setminus \{n\}$. It is easy to see that
  $c$ satisfies our requirements.
\end{proof}

Next, we prove that using some additional assumptions, we can make each colour class $G_n$ quite thin.
If $G \subseteq [\omega_1]^2$ is a graph and $\alpha < \omega_1$, then we let
$G(\alpha) = \{\beta < \alpha \mid \{\beta, \alpha\} \in G\}$. Recall that a
graph $G \subseteq [\omega_1]^2$ is a \emph{Hajnal-M\'{a}t\'{e} graph} if, for
every $\alpha < \omega_1$, $G(\alpha)$ is either a finite set or an
$\omega$-sequence converging to $\alpha$.

The existence of Hajnal-M\'{a}t\'{e} graphs with uncountable chromatic number
turns out to be independent of ZFC. In \cite{hajnal_mate}, Hajnal and
M\'{a}t\'{e} prove that $\diamondsuit^+$ implies the existence of
Hajnal-M\'{a}t\'{e} graphs with uncountable chromatic number, while Martin's
Axiom, $\mathrm{MA}_{\aleph_1}$, implies that every Hajnal-M\'{a}t\'{e} graph
has countable chromatic number. Komj\'{a}th, in \cite{komjath_note_on_hajnal_mate},
improves upon the first result by proving that, if $\diamondsuit$ holds, then
there are triangle-free Hajnal-M\'{a}t\'{e} graphs with uncountable chromatic
number. We improve this result further with the following theorem.

\begin{thm}
  Suppose that $\diamondsuit$ holds. Then there is a partition $[\omega_1]^2 =
  \bigcup_{n < \omega} G_n$ such that each $G_n$ is a triangle-free Hajnal-M\'{a}t\'{e}
  graph with uncountable chromatic number.
\end{thm}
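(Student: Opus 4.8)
The plan is to build the partition $[\omega_1]^2 = \bigcup_{n<\omega} G_n$ by recursion on $\alpha < \omega_1$, deciding at stage $\alpha$ which $G_n$ each pair $\{\beta,\alpha\}$ with $\beta<\alpha$ belongs to. At a successor stage there is nothing subtle; the work is at limit ordinals $\alpha$, where we must choose, for each $n$, a set $G_n(\alpha) \subseteq \alpha$ that is either finite or an $\omega$-sequence converging to $\alpha$, and we must do this so that (a) the $G_n(\alpha)$ partition $\alpha$, (b) no new triangle is created in any $G_n$, and (c) the diamond sequence lets us defeat potential proper colourings of each $G_n$, forcing uncountable chromatic number. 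To handle triangle-freeness cheaply, I would fix in advance, for each $\alpha$, an increasing cofinal $\omega$-sequence $\langle \alpha_k \mid k<\omega\rangle$ in $\alpha$, and when placing $\beta \in G_n(\alpha)$ require that $\beta$ lies in the interval $(\alpha_{k},\alpha_{k+1}]$ for some $k$ determined by $n$ in a way that keeps distinct colours in disjoint tails of $\alpha$ — mimicking the $\varepsilon_\delta$ trick from the maximal-colouring theorem above: any triangle through the new edge $\{\beta,\alpha\}$ would need a common neighbour $\gamma < \beta$ with $\{\gamma,\alpha\} \in G_n$, but by the interval arrangement all earlier $G_n$-neighbours of $\alpha$ are below $\beta$... this needs care, so instead I would simply ensure that for each $n$ at most one $\beta$ in each interval $(\alpha_k,\alpha_{k+1}]$ is put into $G_n(\alpha)$, and in fact arrange $G_n(\alpha)$ itself to be triangle-free-compatible by the standard Komj\'{a}th argument: choose the colour of $\{\beta,\alpha\}$ to avoid creating $\{\beta,\gamma,\alpha\}$ monochromatic for any $\gamma<\beta$ already coloured, which is possible because there are infinitely many colours and only finitely (or countably, but we can interleave) many constraints — the convergence requirement is what forces us to be economical, so each $G_n(\alpha)$ will be a sparse subsequence of $\langle\alpha_k\rangle$.

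For the uncountable chromatic number of each $G_n$, I would use $\diamondsuit$ on $\omega_1$ to guess, at stage $\alpha$, a function $h_\alpha : \alpha \to \omega$ that is a candidate for (a restriction of) a proper vertex-colouring using countably many colours of some $G_n$. Using a standard bookkeeping that cycles through all $n<\omega$ cofinally often, at a stage $\alpha$ assigned to index $n$ where $h_\alpha$ looks like a partial proper colouring of $G_n\restriction\alpha$, I would choose $G_n(\alpha)$ to be an $\omega$-sequence converging to $\alpha$ on which $h_\alpha$ is constant — if $\{\beta<\alpha \mid h_\alpha(\beta)=j\}$ is cofinal in $\alpha$ for some $j$, pick such a $j$ and thin to an $\omega$-sequence; then $\alpha$ together with that sequence is a monochromatic edge-set for $h_\alpha$, killing $h_\alpha$ as a proper colouring. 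The diamond guarantees that any actual $f:\omega_1\to\omega$ is guessed correctly on a stationary set of $\alpha$, and we only need one good $\alpha$ (assigned to the right $n$) with $f\restriction\alpha$ having a cofinal constant fibre below $\alpha$ — which holds for all sufficiently large $\alpha$ in a club, since $f$ is $\omega$-valued on $\alpha$ of uncountable cofinality... but $\alpha$ is countable, so instead I argue: if $f$ were a proper colouring of $G_n$, then for a stationary (hence unbounded) set of $\alpha$ we have $h_\alpha = f\restriction\alpha$, and for club-many $\alpha$ of cofinality $\omega$ at least one colour class of $f\restriction\alpha$ is cofinal in $\alpha$ (pigeonhole on the fixed cofinal sequence $\langle\alpha_k\rangle$); intersecting with the right stage-index class gives the desired $\alpha$, contradiction.

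The main obstacle is the \textbf{tension between the Hajnal-M\'{a}t\'{e} convergence requirement and the partition requirement}: every $\beta<\alpha$ must be assigned to \emph{some} $G_n(\alpha)$, yet each $G_n(\alpha)$ must be finite or a convergent $\omega$-sequence, so we cannot put cofinally many $\beta$ into any single finite $G_n(\alpha)$, and we must spread a typical $\alpha$'s predecessors across infinitely many colours simultaneously, each forming a convergent sequence. The fix is to use the fixed cofinal sequence $\langle\alpha_k\mid k<\omega\rangle$ and partition $\alpha = \bigcup_k (\alpha_{k-1},\alpha_k]$ (with $\alpha_{-1}=0$), putting the (at most one relevant) point of each interval into colours in a round-robin fashion so that each colour receives a subsequence converging to $\alpha$; the triangle-freeness and chromatic-number demands then only further constrain which subsequence, and since at each stage we are free among infinitely many colours for all-but-one of the finitely many active constraints, the construction goes through. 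I expect verifying that the diamond-driven choice of a monochromatic convergent sequence can always be reconciled with triangle-freeness (the chosen sequence must not complete a triangle) to require the observation that we may first thin $h_\alpha$'s cofinal constant fibre to an independent set in the graph built so far below $\alpha$ — possible because that graph restricted to a countable set, if it had no large independent set, would already have uncountable chromatic number on a countable vertex set, which is absurd.
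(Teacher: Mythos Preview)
Your plan has a genuine gap in the chromatic-number step. You propose, at a guessing stage $\alpha$, to make $G_n(\alpha)$ an $\omega$-sequence on which the guessed colouring $h_\alpha$ is \emph{constant}, with some value $j$, claiming this ``kills $h_\alpha$ as a proper colouring.'' But $h_\alpha = f\restriction\alpha$ says nothing about $f(\alpha)$: if $f(\alpha) \neq j$, then every edge $\{\beta,\alpha\}$ with $\beta \in G_n(\alpha)$ is properly coloured by $f$, and nothing in your argument rules this out. Since at stage $\alpha$ you have no access to $f(\alpha)$ and no mechanism for guessing it, you cannot force $f(\alpha) = j$ at even one $\alpha$ in the diamond set. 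The paper's fix is the opposite choice: make $G_n(\alpha)$ a \emph{rainbow} set for $h_\alpha$, choosing $\beta^\alpha_k$ with $h_\alpha(\beta^\alpha_k) = k$ for as many $k<\omega$ as possible. Then whatever $f(\alpha)$ turns out to be, say $k^*$, the neighbour $\beta^\alpha_{k^*}$ (if it was successfully chosen) satisfies $f(\beta^\alpha_{k^*}) = k^* = f(\alpha)$, giving the contradiction.

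Making the rainbow construction go through requires a further idea absent from your plan: one must argue that $\beta^\alpha_{k^*}$ really was chosen, i.e., that at step $k^*$ there existed $\beta<\alpha$ with $h_\alpha(\beta)=k^*$ and $\{\beta^\alpha_j,\beta\}\notin G_n$ for the finitely many earlier $j$. The paper arranges this via a doubly indexed family of stationary sets $S^n_\delta$, each carrying its own $\diamondsuit$-sequence; at $\alpha\in S^n_\delta$ one builds $G_n(\alpha)$ entirely above $\delta$. In the final argument one reflects to find $\beta$ below $\alpha$ lying in some $S^n_\delta$ with $\delta$ large enough that all earlier $\beta^\alpha_j$ lie below $\delta$, so that $\beta$'s own $G_n$-down-neighbours (which lie above $\delta$) are automatically disjoint from them; this $\beta$ then witnesses that step $k^*$ should have succeeded. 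Separately, the tension you flag between the Hajnal--M\'at\'e condition and the partition requirement is much milder than you suggest: at each stage $\alpha$ only one colour $n$ is active, and the paper simply defines $g(\cdot,\alpha)$ to be injective from $\alpha\setminus G_n(\alpha)$ into $\omega\setminus\{n\}$, so every $G_m(\alpha)$ with $m\neq n$ has size at most one; no round-robin interleaving is needed.
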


\begin{proof}
  Since $\diamondsuit$ holds, we can find pairwise disjoint stationary sets
  $\{S^n_\delta \mid n < \omega, ~ \delta < \omega_1\}$ such that $\diamondsuit(S^n_\delta)$
  holds for each $n < \omega$ and $\delta < \omega_1$. For each $n$ and $\delta$,
  we can assume that $S^n_\delta$ consists solely of limit ordinals greater than
  $\delta$, and we can fix a sequence $\langle f^n_{\delta, \alpha} : \alpha \rightarrow
  \omega \mid \alpha < \omega_1 \rangle$ such that, for every $f:\omega_1 \rightarrow
  \omega$, there are stationarily many $\alpha \in S^n_\delta$ for which
  $f^n_{\delta, \alpha} = f \restriction \alpha$.

  We are going to define a function $g:[\omega_1]^2 \rightarrow \omega$ such that,
  for all $n < \omega$, $G_n = g^{-1}\{n\}$ will be as desired. We will define
  $g \restriction [\alpha]^2$ by recursion on $\alpha < \omega_1$. For
  each $\alpha < \omega_1$, let $g_\alpha : \alpha \rightarrow \omega$ denote
  the function defined by letting $g_\alpha(\beta) = g(\beta, \alpha)$ for all
  $\beta < \alpha$.

  Suppose that $\alpha < \omega_1$ and that we have defined $g \restriction
  [\alpha]^2$. We now describe how to define $g \restriction [\alpha + 1]^2$,
  which amounts to defining $g_\alpha : \alpha \rightarrow \omega$. If there
  are no $n < \omega$ and $\delta < \omega_1$ such that $\alpha \in S^n_\delta$,
  then simply let $g_\alpha$ be an arbitrary injective function. Note that this
  introduces no triangles to any $G_n$ and ensures that $|G_n(\alpha)| = 1$
  for every $n < \omega$.

  Otherwise, let $n < \omega$ and $\delta < \omega_1$ be such that $\alpha \in
  S^n_\delta$. We first specify the set of $\beta < \alpha$ for which
  $g_\alpha(\beta) = n$. Begin by fixing an increasing $\omega$-sequence
  $\langle \alpha_k \mid k < \omega \rangle$ converging to $\alpha$ with
  $\delta = \alpha_0$. By recursion on $k < \omega$, we will construct a set
  $A_\alpha \subseteq \omega$ and a sequence $\langle \beta^\alpha_k \mid k \in A_\alpha \rangle$
  such that
  \begin{itemize}
    \item for all $k \in A_\alpha$, we have $\max\{\alpha_k, \max\{\beta^\alpha_j
      \mid j \in A_\alpha \cap k\}\} < \beta^\alpha_k < \alpha$;
    \item for all $k \in A_\alpha$, we have $f^n_{\delta, \alpha}(\beta^\alpha_k) = k$;
    \item for all $j < k$ in $A_\alpha$, we have $g(\beta^\alpha_{j}, \beta^\alpha_{k})
      \neq n$.
  \end{itemize}
  The construction is straightforward. If $k < \omega$ and we have specified
  $A_\alpha \cap k$ and $\{\beta^\alpha_j \mid j \in A_\alpha \cap k\}$, then ask whether there is
  $\beta$ such that $\max\{\alpha_k, \max\{\beta^\alpha_j \mid j \in A_\alpha \cap k\}\}
  < \beta < \alpha$, $f^n_{\delta, \alpha}(\beta) = k$, and, for all $j \in
  A_\alpha \cap k$, $g(\beta^\alpha_j, \beta) \neq n$. If there is, then put $k$ into
  $A_\alpha$ and let $\beta^\alpha_k$ be the least such $\beta$. Otherwise, leave
  $k$ out of $A_\alpha$ and leave $\beta^\alpha_k$ undefined.

  Now define $g_\alpha : \alpha \rightarrow \omega$ by first requiring that $G_n(\alpha) =
  \{\beta^\alpha_k \mid k \in A_\alpha\}$. Note that this set is either finite
  or an $\omega$-sequence converging to $\alpha$. Now define $g_\alpha$ on
  $\alpha \setminus G_n(\alpha)$ to be an injective function into $\omega
  \setminus \{n\}$. Also note that our construction adds no new triangles to any
  $G_n$.

  This finishes the construction of $g$. It is clear that each $G_n$ is a
  triangle-free Hajnal-M\'{a}t\'{e} graph. It remains to show that each $G_n$
  is uncountably chromatic. Suppose for sake of contradiction that $n < \omega$
  and $f:\omega_1 \rightarrow \omega$ is a proper colouring for $G_n$. For each
  $\delta < \omega_1$, we introduce the following notation.
  \begin{itemize}
    \item $T_\delta$ is the stationary set of $\alpha \in S^n_\delta$ for which
      $f^n_{\delta, \alpha} = f \restriction \alpha$.
    \item For all $k < \omega$, $T_{\delta, k}$ is the set of $\alpha \in
      T_\delta$ for which $f(\alpha) = k$.
  \end{itemize}
  It is easy to see that there must be $k < \omega$ such that, for unboundedly
  many $\delta  < \omega_1$, $T_{\delta, k}$ is stationary in $\omega_1$. Fix
  such a $k$. Let $T = \bigcup_{\delta < \omega_1} T_{\delta, k}$, and let
  $E = \{\delta < \omega_1 \mid T_{\delta, k} \mbox{ is unbounded in } \omega_1\}$.
  By our choice of $k$, $T$ is stationary and $E$ is unbounded in $\omega_1$.

  Using the normality of the club filter, we can find $\alpha \in T \cap \lim(E)
  \cap (\Delta_{\delta \in E} \lim(T_{\delta, k}))$. Let $\delta^* < \omega_1$
  be such that $\alpha \in S^n_{\delta^*}$. Since $f(\alpha) = k$,
  it must be the case that, in our construction of $g_\alpha$, we left
  $\beta^\alpha_k$ undefined, because otherwise we would have $f(\beta^\alpha_k) =
  f^n_{\delta^*, \alpha}(\beta^\alpha_k) = k = f(\alpha)$ and $\{\beta^\alpha_k,
  \alpha\} \in G_n$, contradicting the fact that $f$ is a good colouring for $G_n$.
  But now we can find $\delta \in E \cap \alpha$ such that $\beta^\alpha_j < \delta$
  for all $j \in A_\alpha \cap k$. By our choice of $\alpha$, we can find
  $\beta \in T_{\delta, k} \cap \alpha$ with $\beta > \alpha_k$. By our construction
  of $g_\beta$, it follows that, for every $\gamma < \delta$, $g_\beta(\gamma) \neq n$.
  It also follows that $f^n_{\delta^*, \alpha}(\beta) = f(\beta) = k$.
  But then it is easily seen that $\beta$ gives a positive answer to the question
  asked at stage $k$ of the construction of $A_\alpha$, in which case $\beta^\alpha_k$
  is in fact defined. This contradiction completes the proof.
\end{proof}

Note that the graphs $G_n$ defined in the proof above actually satisfy the following
strengthening of triangle-freeness: for all $3 \leq \ell < \omega$, there are no
cycles $\langle \alpha_0, \alpha_1, \ldots, \alpha_{\ell - 1} \rangle$ for which
$\alpha_0 < \alpha_1 < \ldots < \alpha_{\ell - 1}$.

\section{Open problems}
\label{problem_section}

In this final section, we collect some remaining open problems.
We start with the most important questions stemming directly from our investigations. First, on regressive and almost-regressive colourings we ask the following.
\medskip

\begin{prob}
  Suppose that $\kappa$ is a regular uncountable cardinal, $2^\mu < 2^\kappa$
  for every $\mu < \kappa$, and $c$ is an almost $\Delta$-regressive colouring
  on $[2^\kappa]^2$.
  \begin{enumerate}
    \item Does $c$ necessarily have monochromatic triples?
    \item Does $c$ necessarily have infinite monochromatic subsets?
    \item Does $c$ necessarily have monochromatic subsets of size $\kappa$?
  \end{enumerate}
\end{prob}

\begin{prob}
  Suppose that $\kappa$ is a regular uncountable cardinal and
  $c$ is a $\Delta$-regressive colouring on $[2^\kappa]^2$.
  Must there exist a $c$-monochromatic set of size $4$? What about
  an infinite $c$-monochromatic set? What about uncountable
  $c$-monochromatic sets?
\end{prob}

Regarding maximal triangle-free colourings, the next questions are the most
natural.

\begin{prob}
Is there a maximal triangle-free colouring $c:[\omg]^2\to \oo$?
\end{prob}

\begin{prob}Is there, consistently, a maximal triangle-free colouring of $[\omg]^2$ which embeds no uncountable $\delta$-colourings?

\end{prob}

\begin{prob} Assume $MA_{\aleph_1}$ (or even PFA).
 Suppose that $X\subset \mb R$ has size $\aleph_1$ and $c:[X]^2\to \oo$ is a continuous/Borel. Can $c$ be maximal triangle-free?
\end{prob}

It would also be natural to look at colouring triples and in general,
$[2^\kappa]^n$ for some finite $n<\oo$ and look for critical examples. We mention
the following results.

\begin{thm} [Todorcevic, \cite{todorcevic_combinatorial_cubes}]
  There is a colouring $c: [2^\omg]^3 \rightarrow 10$ such that all colours
  appear on any uncountable $X \subseteq 2^\omg$. More generally,
  for every $r \geq 3$, there is a colouring $c: [2^\omg]^r \rightarrow
  r!(r-1)! - 2$ such that all colours appear on any uncountable
  $X \subseteq 2^\omg$.
\end{thm}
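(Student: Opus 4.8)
The plan is to build, for each $r\ge 3$, a colouring $c_r:[2^\omg]^r\to r!(r-1)!-2$ whose value on a tuple $\{s_0,\dots,s_{r-1}\}$ — listed in increasing lexicographic order — is a finite ``type'' extracted from the configuration of the $r-1$ successive split levels $\delta_i=\Delta(s_{i-1},s_i)<\omg$, and then to show that this type cannot be stabilized on any uncountable $X$. The naive type, namely just the $<$-pattern of the $\delta_i$ (equivalently, the shape of the meet-tree spanned by the tuple), does \emph{not} work: if $X=\{x_\alpha:\alpha<\omg\}$ where $x_\alpha(\eta)=1$ for $\eta<\alpha$ and $x_\alpha(\eta)=0$ otherwise, then every triple from $X$ has $\delta_1<\delta_2$, and a symmetric construction yields an uncountable $X$ in which every triple has $\delta_1>\delta_2$; so the tree shape alone is avoidable. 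What is unavoidable is a genuinely finer invariant, and the natural tool is Todor\v{c}evi\'{c}'s theory of minimal walks on $\omg$: fixing once and for all a $C$-sequence $\langle C_\gamma:\gamma<\omg\rangle$ together with its walk functions $\rho_0,\rho_1,\rho_2,\rho_3$ and oscillation function $\mathrm{osc}$, I would let the type of $\{s_0,\dots,s_{r-1}\}$ record, beyond the coarse tree shape, the walk data — $\rho$-values modulo a finite initial segment, oscillation parities — between the distinct ordinals occurring among the $\delta_i$. The theorem then splits into: (i) this invariant has exactly $r!(r-1)!-2$ possible values; and (ii) every value is realized on every uncountable $X\subseteq 2^\omg$.

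For (ii) — the heart of the matter — I would argue via an elementary submodel / pressing-down argument combined with the basic ``unboundedness and oscillation'' lemmas for $\rho$-functions. Given uncountable $X$ (we may assume $|X|=\aleph_1$), first pass, by a standard Cantor--Bendixson-type analysis of the meet-closure of $X$ inside $2^{<\omg}$, to a configuration realizing a prescribed coarse tree shape as densely as possible; then, inside that, invoke the oscillation theory of the walk functions — the same lemmas that power Todor\v{c}evi\'{c}'s proof of $\omg\not\to[\omg]^2_{\omg}$ and related unavoidable-colouring results — to locate, for any prescribed walk-pattern, a tuple from $X$ whose successive split levels realize it. Concretely, one shows that for any uncountable family of pairs of countable ordinals the induced walk/oscillation colouring is onto a fixed finite target, and one bootstraps from pairs to $r$-tuples by peeling off $s_0$ (or the element realizing the highest split) and appealing to an inductive form of the statement. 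Finally one checks the count in (i): the $r!(r-1)!$ a priori possibilities arise from combining the orderings of the $r-1$ split levels with the admissible walk/branching refinements, and a short argument shows that precisely two of these combinations are incompatible with the inequalities forced by lexicographic order, which accounts for the $-2$.

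The main obstacle, by far, is (ii): producing, inside an \emph{arbitrary} uncountable $X\subseteq 2^\omg$, tuples realizing every walk-invariant. This is where the substance of the theorem lies, and it genuinely requires the oscillation machinery for minimal walks rather than any soft tree surgery — the examples above show that the target invariant is avoidable without the walk functions. By contrast, the count in (i), the verification that $c_r$ is well defined on unordered tuples, and the reduction from $r$-tuples to pairs are comparatively routine, though the value of the ``$-2$'' deserves an explicit check. Once (i) and (ii) are in hand the theorem is immediate: given an uncountable $X$ and a colour $\tau$, (ii) yields an $r$-tuple from $X$ of type $\tau$, so $c_r$ assumes the value $\tau$ on $[X]^r$; hence all $r!(r-1)!-2$ colours appear on $[X]^r$.
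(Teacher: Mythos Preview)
This theorem is not proved in the paper; it is quoted from \cite{todorcevic_combinatorial_cubes} in the open-problems section and stated without argument. There is therefore no ``paper's own proof'' to compare against here --- only Todor\v{c}evi\'{c}'s original argument in the cited reference.

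Judged on its own terms, your proposal is a strategy sketch rather than a proof, and its central steps are either missing or doubtful. First, you never actually define the colouring. ``$\rho$-values modulo a finite initial segment, oscillation parities'' does not specify a map into a set of size $r!(r-1)!-2$; for $r=3$ you would need an invariant with exactly ten values, and nothing in your outline produces one (the two orderings of $\delta_1,\delta_2$ together with the one free branching bit give only four tree-types, so your walk data would have to contribute a further factor --- but of what, and why that number?). Your accounting for the ``$-2$'' is asserted, not computed. Second, the unavoidability step is hand-waved. The oscillation lemmas you invoke concern colourings of $[\omega_1]^2$; to transport them you would have to show that for \emph{every} uncountable $X\subseteq 2^{\omega_1}$ the collection of split-level pairs $(\delta_1,\delta_2)$ arising from triples in $X$ is rich enough that the walk colouring is onto, and moreover that each walk value can be realized \emph{jointly} with each prescribed tree shape. ``Bootstrap from pairs to $r$-tuples'' names the difficulty without addressing it.

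Finally, as far as Todor\v{c}evi\'{c}'s actual argument in \cite{todorcevic_combinatorial_cubes} goes, it does not proceed via minimal walks. The colouring there records a purely finite combinatorial ``type'' of the $r$-tuple extracted from the tree structure of $2^{<\omega_1}$ (a finer invariant than the raw meet-tree shape), the number $r!(r-1)!-2$ is obtained by an explicit enumeration of these types, and the unavoidability is proved by a direct Ramsey-type analysis of the configurations that must occur inside any uncountable $X\subseteq 2^{\omega_1}$. Your observation that the naive tree shape alone is avoidable is correct and to the point, but the remedy in the source is a sharper tree invariant, not walk machinery.
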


\begin{prop}
  There is a colouring $c:[2^\omg]^{3}\to \omg$ such that all colours appear on
  any dense-in-itself $X\subseteq 2^\omg$.
\end{prop}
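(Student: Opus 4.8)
The plan is to reduce the problem to Todor\v{c}evi\'{c}'s theorem $\omega_1 \not\rightarrow [\omega_1]^2_{\omega_1}$ (from \cite{todorcevic_partitioning}, already invoked above) by feeding it the pair of ``splitting levels'' of a triple. First recall the basic tree-like property of $\Delta$: given distinct $x,y,z \in 2^{\omega_1}$, the three values $\Delta(x,y), \Delta(x,z), \Delta(y,z)$ are all defined, and if $\gamma$ denotes the least of them then $x,y,z$ agree below $\gamma$, so by the pigeonhole principle two of them agree at coordinate $\gamma$ as well; hence $\gamma$ is attained by exactly the two pairs containing the third (``minority'') branch, while the remaining pair has $\Delta$-value strictly above $\gamma$. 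Consequently the multiset $\{\Delta(x,y), \Delta(x,z), \Delta(y,z)\}$ takes exactly two distinct values, which we call $\gamma_0 < \gamma_1$. Now fix a colouring $e : [\omega_1]^2 \to \omega_1$ witnessing $\omega_1 \not\rightarrow [\omega_1]^2_{\omega_1}$, i.e.\ such that $e``[Y]^2 = \omega_1$ for every uncountable $Y \subseteq \omega_1$, and define
\[
  c(\{x,y,z\}) = e(\gamma_0, \gamma_1),
\]
with $\gamma_0 < \gamma_1$ the two splitting levels of $\{x,y,z\}$ as above.

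To verify that $c$ is as required, fix a dense-in-itself $X \subseteq 2^{\omega_1}$ and a colour $\alpha < \omega_1$. Choose any $x \in X$ and put $\Gamma_x = \{\Delta(x,w) : w \in X \setminus \{x\}\}$. Since $X$ is dense-in-itself, for each $\beta < \omega_1$ the basic open set $N_{x \restriction \beta}$ contains a point of $X$ other than $x$, i.e.\ there is $w \in X \setminus \{x\}$ with $\Delta(x,w) \geq \beta$; thus $\Gamma_x$ is unbounded in $\omega_1$, and in particular uncountable. By the choice of $e$ there are $\gamma_0 < \gamma_1$ in $\Gamma_x$ with $e(\gamma_0, \gamma_1) = \alpha$. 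Pick $y, z \in X \setminus \{x\}$ with $\Delta(x,y) = \gamma_1$ and $\Delta(x,z) = \gamma_0$; then $y \neq z$ (otherwise $\gamma_0 = \gamma_1$), so $\{x,y,z\} \in [X]^3$. Since $\gamma_0 < \gamma_1 = \Delta(x,y)$ we have $y \restriction \gamma_0 = x \restriction \gamma_0 = z \restriction \gamma_0$ while $z(\gamma_0) \neq x(\gamma_0) = y(\gamma_0)$, so $\Delta(y,z) = \gamma_0$. Hence the two splitting levels of $\{x,y,z\}$ are precisely $\gamma_0 < \gamma_1$, and therefore $c(\{x,y,z\}) = e(\gamma_0,\gamma_1) = \alpha$, as desired.

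The one place where any real thought is needed is the reduction itself. There is no hope of choosing $c$ to depend only on the ``abstract order type'' of a triple: thin dense-in-itself sets --- such as the set of finitely supported functions into an arbitrary cofinal $A \subseteq \omega_1$ --- confine both the splitting levels and the bit-patterns of their elements to $A$, so any fixed ordinal-arithmetic recipe applied to a triple will miss colours on a suitably chosen such $X$. What saves us is that, even for such pathological $X$, the set $\Gamma_x$ of splitting levels above a single branch $x$ is uncountable, so that the \emph{pair} $(\gamma_0,\gamma_1)$ ranges over all of $[\Gamma_x]^2$, and Todor\v{c}evi\'{c}'s colouring --- being onto $\omega_1$ on the pairs of \emph{any} uncountable set --- does the rest. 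The remaining points --- that the minimum of the three pairwise $\Delta$'s is always attained exactly twice, and the one-line computation of $\Delta(y,z)$ --- are entirely routine.
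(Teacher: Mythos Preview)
Your proof is correct and is exactly the argument the paper sketches in one line: apply a colouring witnessing $\omega_1 \not\rightarrow [\omega_1]^2_{\omega_1}$ to the two $\Delta$-values determined by a triple. You have supplied the details the paper omits --- the ultrametric fact that a triple has exactly two splitting levels, and the observation that dense-in-itselfness makes $\Gamma_x$ uncountable --- all of which are fine.
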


\begin{proof}
  Use a colouring witnessing $\omega_1 \not\rightarrow [\omega_1]^2_{\omega_1}$
  on the two $\Delta$-values determined by any triple in $[2^\omg]^3$.
\end{proof}

Next, we mention a prominent open problem concerning uncountable
triangle-free graphs that is tangentially related to our work.

\begin{prob}[Erd\H{o}s]
  Is there, in ZFC, a graph $G$ of uncountable chromatic number
  so that any triangle-free subgraph of $G$ has countable chromatic number?
\end{prob}

Komj\'{a}th and Shelah \cite{kopeshelah} proved that, consistently,
the answer is yes.
They also proved that if $\chi(G)\geq \mf c^+$ then either $G$ contains a $K_4$ or a triangle free subgraph of uncountable chromatic number.  We wonder if similarly to this and \cite{rodl} where the finite case is dealt with, one can show:

\begin{prob}
Suppose that $\kappa$ is strongly inaccessible, and let $G$ have chromatic number $\kappa$. Is there, for any $\lambda<\kappa$, a triangle-free subgraph of $G$ of chromatic number at least $\lambda$?
\end{prob}

Finally, we end with two questions about colourings $c:[\omega_1]^2 \rightarrow
\omega$ with large colour classes.

\begin{prob}
  Is there, in ZFC, a triangle-free  $c:[\omg]^2\to \oo$ so that for any
  uncountable $X\subset \omg$, $c\uhr [X]^2$ assumes all but finitely many colours.
\end{prob}

\begin{prob}
 Is there  a triangle-free  $c:[\omg]^2\to \oo$ so that for any partition $\omg=\bigcup_{i<\oo}X_i$, there is some $i<\oo$ so that $c\uhr [X_i]^2$ assumes all values.
\end{prob}

This last question seems closely related to the simultaneous chromatic number
problems studied by Hajnal and Komj\'{a}th \cite{simchrom}.


\begin{thebibliography}{10}

\bibitem{baumgartner_ineffability}
J.~E. Baumgartner.
\newblock Ineffability properties of cardinals. {I}.
\newblock In {\em Infinite and finite sets ({C}olloq., {K}eszthely, 1973;
  dedicated to {P}. {E}rd\H{o}s on his 60th birthday), {V}ol. {I}}, pages
  109--130. Colloq. Math. Soc. J\'{a}nos Bolyai, Vol. 10. 1975.

\bibitem{erdos_rado}
P.~Erd\H{o}s and R.~Rado.
\newblock A construction of graphs without triangles having pre-assigned order
  and chromatic number.
\newblock {\em J. London Math. Soc.}, s1-35(4):445--448, 1960.

\bibitem{erdos_rado_partition_calculus}
P.~Erd\"{o}s and R.~Rado.
\newblock A partition calculus in set theory.
\newblock {\em Bull. Amer. Math. Soc.}, 62:427--489, 1956.

\bibitem{generalized_dst}
Sy-David Friedman, Tapani Hyttinen, and Vadim Kulikov.
\newblock Generalized descriptive set theory and classification theory.
\newblock {\em Mem. Amer. Math. Soc.}, 230(1081):vi+80, 2014.

\bibitem{simchrom}
Andr\'{a}s Hajnal and P\'{e}ter Komj\'{a}th.
\newblock Some remarks on the simultaneous chromatic number.
\newblock {\em Combinatorica}, 23(1):89--104, 2003.

\bibitem{hajnal_mate}
Andr\'{a}s Hajnal and Attila M\'{a}t\'{e}.
\newblock Set mappings, partitions, and chromatic numbers.
\newblock In {\em Logic {C}olloquium '73 ({B}ristol, 1973)}, pages 347--379.
  North-Holland, Amsterdam, 1975.

\bibitem{komjath_note_on_hajnal_mate}
P\'{e}ter Komj\'{a}th.
\newblock A note on {H}ajnal-{M}\'{a}t\'{e} graphs.
\newblock {\em Studia Sci. Math. Hungar.}, 15(1-3):275--276, 1980.

\bibitem{komjath_second_note}
P\'{e}ter Komj\'{a}th.
\newblock A second note on {H}ajnal-{M}\'{a}t\'{e} graphs.
\newblock {\em Studia Sci. Math. Hungar.}, 19(2-4):245--246, 1984.

\bibitem{kopeshelah}
P\'{e}ter Komj\'{a}th and Saharon Shelah.
\newblock Forcing constructions for uncountably chromatic graphs.
\newblock {\em J. Symbolic Logic}, 53(3):696--707, 1988.

\bibitem{rodl}
V.~R\"{o}dl.
\newblock On the chromatic number of subgraphs of a given graph.
\newblock {\em Proc. Amer. Math. Soc.}, 64(2):370--371, 1977.

\bibitem{todorcevic_partitioning}
Stevo Todor\v{c}evi\'{c}.
\newblock Partitioning pairs of countable ordinals.
\newblock {\em Acta Math.}, 159(3-4):261--294, 1987.

\bibitem{todorcevic_combinatorial_cubes}
Stevo Todor\v{c}evi\'{c}.
\newblock Some partitions of three-dimensional combinatorial cubes.
\newblock {\em J. Combin. Theory Ser. A}, 68(2):410--437, 1994.

\end{thebibliography}
\end{document}